\newcommand{\wv}{\lambda} %%masses sent to warehouses
\newcommand{\wvs}{\Lambda}
\newcommand{\ov}{\bm{1}} %%vector of ones
\newcommand{\gtp}{\nabla \ti \Phi}
\newcommand{\dv}{\chi}
\newcommand{\subprop}[1]{S^{prop}_{#1}}
\newcommand{\gammaN}{\beta}
\newcommand{\dQ}{d_{G(\psi)}}
\let\mbf\mathbf
\DeclareMathOperator*{\argmin}{arg\,min}
\DeclareMathOperator{\dist}{dist}
\renewcommand{\L}{\mathcal{L}}
\renewcommand{\H}{\mathcal{H}}
\newcommand\numberthis{\addtocounter{equation}{1}\tag{\theequation}}
\begin{document}
% --------------------------------------------------------------
%                         Start here
% --------------------------------------------------------------

\title[] {Two Stage Algorithm for Semi-Discrete Optimal Transport on Disconnected Domains}
 
\author{Mohit Bansil}
\address{Department of Mathematics, Michigan State University}
\email{bansilmo@msu.edu}

\subjclass[2010]{}

\begin{abstract}

In this paper we present a two-stage algorithm to solve the semi-discrete optimal transport problem in the case where the support of the source measure is disconnected. We establish global linear convergence and local superlinear convergence. We also find convergence of the associated Laguerre cells both in measure and in the sense of Hausdorff distance.

\end{abstract}

\maketitle

\tableofcontents

\section{Introduction}

We begin by recalling the semi-discrete optimal transport problem. Let $X\subset \R^n$, $n\geq 2$ be compact and $Y:=\{y_i\}_{i=1}^N \subset \R^n$ a fixed collection of finite points, along with a \emph{cost function} $c: X\times Y\to \R$. We also fix Borel probability measures $\mu, \nu$ with $\spt \mu\subset X$ and $\spt \nu \subset Y$. Furthermore $\mu$ is absolutely continuous with respect to Lebesgue measure and we denote its density by $\rho$. We set $\wv = (\nu(y_1), \dots, \nu(y_N))$. 

We want to find a measurable mapping $T: X\to Y$ such that $T_\#\mu(E):=\mu(T^{-1}(E))=\nu(E)$ for any measurable $E\subset Y$, and $T$ satisfies
 \begin{align}\label{eqn: monge}
\int_X c(x, T(x)) d\mu = \min_{\ti T_\#\mu=\nu} \int_X c(x, \ti T(x)) d\mu.
\end{align}

In this paper our goal is to propose and show convergence of a two stage algorithm. The first stage is a regularized gradient descent which achieves global linear convergence with respect to the regularized problem. The second stage is a damped Newton algorithm similar to that of \cite{KitagawaMerigotThibert19} which has superlinear local convergence. The main difference between our algorithm and that of \cite{KitagawaMerigotThibert19} is ours doesn't require $\spt \mu$ to be connected. Without a connected support the algorithm in \cite{KitagawaMerigotThibert19} may not converge at all. 

Furthermore, we will show that the Laguerre cells, $T^{-1}(y_i)$, for the approximate optimizers constructed by our algorithm converge both in the $L^1(\mu)$ sense and in the sense of Hausdorff distance. This result is analogous to that of \cite{BansilKitagawa19b}. While the $L^1(\mu)$ convergence follows directly from the results there, the Hausdorff convergence result in \cite{BansilKitagawa19b} heavily relies on the connectedness of $\spt \mu$.

\section{Setup}

\subsection{Notations and Conventions}

Here we collect some notations and conventions for the remainder of the paper. We fix positive integers $N, M, n$ with $N, n \geq 2$ and a collection $Y:=\{y_i\}_{i=1}^N\subset \R^n$. We will write $\onevect$ to denote the vector in $\R^N$ whose components are all $1$. For any vector $V \in \R^k$, we will write its components as superscripts so $V^i$ is the $i$-th component of $V$. We reserve the notation $\norm{V}$ for the $l^2$-Euclidean norm, i.e. $\norm{V} = \sqrt{\sum_{i=1}^k \abs{V^i}^2}$. We use $\norm{V}_1, \norm{V}_\infty$ for the $l^1$ and $l^\infty$ Euclidean norms, i.e. $\norm{V}_1 = \sum_{i=1}^k \abs{V^i}$ and $\norm{V}_\infty = \max_{i\in \{1, \dots, k\}} \abs{V^i}$. 
We use $\L$ to denote $n$-dimensional Lebesgue measure and $\H^k$ to denote $k$-dimensional Hausdorff measure. Also we use the notation
 \begin{align*}
\wvs = \{ \wv \in \R^N: \wv^i \in [0,1], \sum_{i=1}^N \wv^i = 1 \}
\end{align*}
for the set of all admissible weight vectors.

We will assume that the cost function $c$ satisfies the following standard conditions:
\begin{align}
c(\cdot, y_i)&\in C^2(X), \forall i\in \{1, \ldots, N\},\label{Reg}\tag{Reg}\\
\nabla_xc(x, y_i)&\neq \nabla_xc(x, y_k),\ \forall x\in X,\ i\neq k.\label{Twist}\tag{Twist}
\end{align}
We also assume the following condition, originally studied by Loeper in \cite{Loeper09}.
\begin{defin}
	The cost, $c$, is said to satisfy \emph{Loeper's condition} if for each $i\in \{1, \ldots, N\}$ there exists a convex set $Y_i\subset \R^n$ and a $C^2$ diffeomorphism $\cExp{i}{\cdot}: Y_i\to X$ such that 
	\begin{align*}
	\forall\ t\in\R,\ 1\leq k, i\leq N,\ \{p\in Y_i\mid -c(\cExp{i}{p}, y_k)+c(\cExp{i}{p}, y_i)\leq t\}\text{ is convex}.\label{QC}\tag{QC}
	\end{align*}
	See Remark \ref{rmk: brenier solutions} below for further discussion of these conditions.
	
	We also say that a set $X\subset \R^n$ is \emph{$c$-convex} with respect to $Y$ if $\invcExp{i}{X}$ is a convex set for every $i\in \{1, \ldots, N\}$. 
\end{defin}

\begin{defin}
	For any $\psi\in \R^N$ and $i\in \{1, \ldots, N\}$, we define the \emph{$i$th Laguerre cell} associated to $\psi$ as the set
	\begin{align*}
	\Lag_i(\psi):=\{x\in X\mid c(x, y_i)+\psi^i= \min_{i} c(x,y_i) + \psi^i \}.
	\end{align*}
	We also define the function $G: \R^n\to \weightvectset$ by
	\begin{align*}
	G(\psi):=(G^1(\psi), \ldots, G^N(\psi))=(\mu(\Lag_1(\psi)), \ldots, \mu(\Lag_N(\psi))),
	\end{align*}
	and denote for any $\epsilon\geq 0$,
	\begin{align*}
	\mathcal{K}^\epsilon:=\{\psi\in \R^N\mid G^i(\psi)> \epsilon,\ \forall i\in \{1, \ldots, N\}\}.
	\end{align*}
\end{defin}

\begin{rmk}\label{rmk: brenier solutions}
	The above conditions \eqref{Reg}, \eqref{Twist}, \eqref{QC} are the same ones assumed in \cite{KitagawaMerigotThibert19} and \cite{BansilKitagawa19b}. As is also mentioned in those papers, \eqref{Reg} and \eqref{Twist} are standard conditions in optimal transport. Furthermore, \eqref{QC} holds if $Y$ is a finite set sampled from from a continuous space, and $c$ is a $C^4$ cost function satisfying what is known as the \emph{Ma-Trudinger-Wang} condition (first introduced in a strong form in \cite{MaTrudingerWang05}, and in \cite{TrudingerWang09} in a weaker form).
	
	If $\mu$ is absolutely continuous with respect to Lebesgue measure (or even if $\mu$ doesn't give mass to small sets), then \eqref{Twist} implies that the Laguerre cells are pairwise $\mu$-almost disjoint. In this case the generalized Brenier's theorem \cite[Theorem 10.28]{Villani09}, tells us that for any vector $\psi\in \R^N$, the map $T_\psi: X\to Y$ defined by $T_\psi(x)=y_i$ whenever $x\in \Lag_i(\psi)$ is a minimizer in the optimal transport problem \eqref{eqn: monge}, where the source measure is $\mu$ and the target measure is defined by $\nu=\nu_{G(\psi)}$.
\end{rmk}

We define $C_\nabla := \sup_{x \in X, y \in Y} \norm{\nabla_x c(x, y)}$. 

Throughout the rest of the paper we will we assume that $X$ is compact and $c$-convex with respect to $Y$ and that $\rho$, the density of $\mu$, is $\alpha$-H\"older continuous.

In order to obtain our convergence results we will need $\spt \mu$ to be at least locally connected in a quantitative way. First we recall a definition. 

\begin{defin}
	A non-zero measure $\mu$ on a compact set $Z \subset \R^n$ satisfies a \emph{$(q, 1)$-Poincar\'e-Wirtinger inequality} for some $1\leq q\leq \infty$ if there exists a constant $\Cpw>0$ such that for any $f\in C^1(Z)$,
	\begin{align*}
	\norm{f-\int_Z f d\mu}_{L^q(\mu)}\leq \Cpw \norm{\nabla f}_{L^1(\mu)}.
	\end{align*}
	For brevity, we will write this as ``$\mu$ satisfies a \emph{$(q, 1)$-PW inequality} on $Z$''. If $\mu$ is defined on a larger set then $Z$ then the phrase ``$\mu$ satisfies a $(q, 1)$-PW inequality on $Z$'' is understood to mean that $\mu(Z) > 0$ and the restriction of $\mu$ to $Z$ satisfies a $(q, 1)$-PW inequality.
\end{defin}

\begin{rmk}
	
We remark that some version of all of our results hold with only $(1,1)$-PW inequalities but $q > 1$ will give better constants. 
	
\end{rmk}

We now define a local version of the PW inequality that will serve as our quantitative measure of the local connectivity of $\spt \mu$.

\begin{defin}
	
A probability measure $\mu$ on a compact set $X \subset \R^n$ satisfies a \emph{local $(q, 1)$-Poincar\'e-Wirtinger inequality} if there are compact $X_i \subset X$ which are pairwise disjoint and $\mu$-almost cover $X$, i.e. $\mu(X \setminus (X_1 \cup \dots \cup X_M)) = 0$ such that $\mu$ satisfies a $(q, 1)$-PW inequality on each $X_i$ with constant $\kappa_i$.	

We define the local PW constant $\kappa$ to be the $q$-th power mean of the $\kappa_i$, i.e. $\kappa = (\frac{1}{M} \sum_{i=1}^M \kappa_i^{q})^{1/q}$. Note that $\kappa \leq \max_i \kappa_i$.
\end{defin}

When we have $M=1$ we recover the global connectivity assumption of \cite[Definition 1.3]{KitagawaMerigotThibert19}. Throughout the paper we use the notation $\dv^i = \mu(X_i)$ and the vector $\dv = (\dv^1, \dots, \dv^M) \in \R^M$. 

In order to obtain convergence of our algorithm we require that optimal map $T$, ``splits up'' the $X_i$. In order to assure this we assume that the subsets sums of $\dv$ and $\wv$ are separated. For any vector $V \in \R^k$ we introduce the notation $S_V$ for the set of subset sums of $V$, i.e
 \begin{align*}
S_V = \left \{ \sum_{a \in A } a: A \subset \{V^1, \dots, V^k\} \right \}
\end{align*}
and the notation $\subprop{V}$ for the set of proper subset sums of $V$, i.e.
\begin{align*}
\subprop{V} = \left \{ \sum_{a \in A } a: A \subsetneq \{V^1, \dots, V^k\}, A \neq \emptyset \right \}. 
\end{align*} 
With this notation, we say that the subset sums of $\dv$ and $\wv$ are separated if  $\subprop{\wv} \cap S_\dv = \emptyset$. 

\begin{rmk}\label{rmk: small set}

Note that for any fixed $X$ with fixed decomposition $X_1, \dots, X_M$, the collection of all $\wv \in \wvs$ such that our condition is not satisfied (i.e. $\subprop{\wv} \cap S_\dv \neq \emptyset$) is a ``small set'' in the sense that it has Hausdorff dimension $n-2$ whereas $\wvs$ has Hausdorff dimension $n-1$. In particular if $\wv$ is randomly chosen from $\wvs$ (with respect to any probability measure that is absolutely continuous with respect to $\H^{n-1}$), then with probability 1, we have $\subprop{\wv} \cap S_\dv = \emptyset$. We defer a formal proof of this till Proposition \ref{prop: small set}.

\end{rmk}

For the remainder of the paper we use the notation $d_{\wv} := \dist(\subprop{\wv}, S_\dv)$ for any $\wv \in \wvs$. Note that since $N > 1$, $\subprop{\wv}$ is never empty and so $d_{\wv}$ is always defined. We defer a discussion of methods to compute and/or bound $d_\wv$ to section \ref{sec: estimation of d}. 

It is well-known that the optimal transport problem has a dual problem with strong duality (we refer the reader to \cite[Chapter 1]{Villani03} for background),
 \begin{align}\label{eqn: dual}
\min_{\ti T_\#\mu=\nu} \int_X c(x, \ti T(x)) d\mu 
= \sup_{\psi \in \R^N} \int_X (\min_j c(x,y_j) + \psi^j) \dmu - \inner{\psi}{\wv}.
\end{align}
Furthermore given any dual optimizer $\psi$ the map $T_\psi$ from Remark $\ref{rmk: brenier solutions}$ is a optimal transport map for the primal problem. Because of this we are able to optimize the finite dimensional dual problem instead of the infinite dimensional primal problem. We define Kantorovich's functional as in \cite[Theorem 1.1]{KitagawaMerigotThibert19}
 \begin{align*}
\Phi(\psi) 
:= \int_X (\min_{i} c(x,y_i) + \psi^i) \dmu - \inner{\psi}{\wv}
= \sum_{i} \int_{\Lag_i(\psi)} (c(x, y_i) + \psi^i) \dmu - \inner{\psi}{\wv}. 
\end{align*}
Our algorithm will find approximate maximizers of $\Phi$. We also recall that $\nabla \Phi(\psi) = G(\psi) - \wv$. 

\subsection{Previous Results}

There are many papers that apply a Newton-type algorithm to solve semi-discrete optimal transport problems and Monge-Amp\`ere equations. In \cite{OlikerPrussner88} the authors prove local convergence of a Newton Method for solving a semi-discrete Monge-Amp{\`e}re equation. Global convergence is proved in \cite{Mirebeau15}. 

For the semi-discrete optimal transport problem the authors of \cite{KitagawaMerigotThibert19} give a Newton method similar to our Algorithm \ref{alg: damped newton}, in the case when the source measure is given by a continuous probability density and has connected support. An analogous result for the case when $\mu$ is supported on a union of simplexes is given in \cite{Merigot17}, however this paper also requires a connectedness condition on the support of $\mu$. 

In \cite{BansilKitagawa19b}, Kitagawa and the author present a Newton method to solve a generalization of the semi-discrete optimal transport problem in which there is a storage fee. Our results do not require a connectedness condition on the support of $\mu$, however we require a strict convexity assumption on the storage fee function that doesn't hold for the classical semi-discrete optimal transport problem. Furthermore we show that the Laguerre cells associated to the approximate transport maps converge quantitatively both in the sense of $\mu$-symmetric distance and in the sense of Hausdorff distance.

\subsection{Outline of the Paper}

In section \ref{sec: spectral estimates} we obtain quantitative bounds on the strong concavity of $\Phi$. In section \ref{sec: convergence newton} we exploit these bounds to obtain local convergence of a damped Newton algorithm. In section \ref{sec: convergence gradient} we obtain global convergence of a gradient descent algorithm applied to a regularized version of $\Phi$. In section \ref{sec: two stage} we discuss convergence of our two stage algorithm. In section \ref{sec: convergence cells} we obtain quantitative convergence of the associated Laguerre cells. In section \ref{sec: estimation of d} we describe some methods to compute and/or estimate the parameter $d_\wv$ from the initial data. 

\section{Spectral Estimates}\label{sec: spectral estimates}

In this section we work toward quantitative bounds on the strong concavity of $\Phi$.

To start off we recall some notation in order to remain consistent with \cite{KitagawaMerigotThibert19}.
\begin{defin}
	We will write $\interior(X_i)$ to denote the interior of the set $X_i$. Given an absolutely continuous measure $\mu=\rho dx$, where $\rho$ is continuous, and a set $A\subset X_i$ with Lipschitz boundary, we will write
	\begin{align*}
	\abs{\partial A}_{\rho, X_i}:&=\int_{\partial A\cap \interior(X_i)}\rho d \H^{n-1},\quad
	\abs{A}_\rho:=\mu(A).
	\end{align*}
\end{defin}

The next lemma is virtually identical to \cite[Lemma 7.4]{BansilKitagawa19b}. We omit the proof. 

\begin{lem}\label{lem: PW inequality bound}
	Suppose that $\mu=\rho dx$ satisfies a $(q,1)$-PW inequality on $X_i$. Then
	\begin{align*}
	\inf_{A \subset X_i} \frac{\abs{\partial A}_{\rho, X_i}}{\min(\abs{A}_\rho, \abs{X_i \setminus A}_\rho)^{1/q}} \geq \frac{1}{2^{1/q}\kappa_i},
	\end{align*}
	where the infimum is over $A\subset \interior(X_i)$ whose boundary is Lipschitz with finite $\mathcal{H}^{n-1}$-measure, and $\min(\abs{A}_\rho, \abs{X_i \setminus A}_\rho)>0$.
\end{lem}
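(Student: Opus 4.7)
The plan is to derive this Cheeger-type bound by applying the $(q,1)$-PW inequality on $X_i$ to Lipschitz approximations of $\mathbf{1}_A$ and letting the approximation parameter shrink, which is the standard route from a Poincaré--Wirtinger inequality to an isoperimetric inequality. Fix $A\subset\interior(X_i)$ with Lipschitz boundary, and assume without loss of generality $\abs{A}_\rho\leq\abs{X_i\setminus A}_\rho$. Since $\overline{A}$ is compact and strictly interior to $X_i$, for every sufficiently small $\epsilon>0$ the $\epsilon$-tubular neighborhood $A_\epsilon$ of $A$ lies in $\interior(X_i)$, and I can define the cutoff
\[
 f_\epsilon(x):=\max\!\left(0,\,1-\frac{\dist(x,A)}{\epsilon}\right),
\]
which is Lipschitz, equal to $1$ on $A$, vanishes outside $A_\epsilon$, and satisfies $\abs{\nabla f_\epsilon}\leq 1/\epsilon$ almost everywhere, with gradient supported in the thin shell $A_\epsilon\setminus A\subset\interior(X_i)$.

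Applying the $(q,1)$-PW inequality on $X_i$ to $f_\epsilon$ gives
\[
 \left\|f_\epsilon-\int_{X_i}f_\epsilon\,d\mu\right\|_{L^q(\mu)}\leq \kappa_i\,\|\nabla f_\epsilon\|_{L^1(\mu)},
\]
and I then pass to the limit $\epsilon\to 0$ on both sides. On the right-hand side, the coarea formula applied to the Lipschitz function $x\mapsto\dist(x,A)$, together with continuity of $\rho$ and Lipschitz regularity of $\partial A$, shows
\[
 \|\nabla f_\epsilon\|_{L^1(\mu)}=\frac{1}{\epsilon}\int_{A_\epsilon\setminus A}\rho\,d\L\ \xrightarrow{\epsilon\to 0}\ \abs{\partial A}_{\rho,X_i}.
\]
On the left-hand side, $f_\epsilon\to\mathbf{1}_A$ pointwise $\mu$-almost everywhere (since $\L(\partial A)=0$ by Lipschitz regularity) and is uniformly bounded in $[0,1]$, so dominated convergence yields convergence to $\|\mathbf{1}_A-\abs{A}_\rho\|_{L^q(\mu)}$.

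The last step is a direct algebraic lower bound. The explicit expression
\[
 \|\mathbf{1}_A-\abs{A}_\rho\|_{L^q(\mu)}^q = (1-\abs{A}_\rho)^q\abs{A}_\rho + \abs{A}_\rho^q\abs{X_i\setminus A}_\rho,
\]
combined with the normalizations $\abs{A}_\rho\leq\abs{X_i\setminus A}_\rho$ and $\mu(X)\leq 1$, is bounded from below by $\tfrac{1}{2}\abs{A}_\rho=\tfrac{1}{2}\min(\abs{A}_\rho,\abs{X_i\setminus A}_\rho)$ after keeping only the dominant summand and using $(1-\abs{A}_\rho)^q\geq 2^{-q}\cdot$(something appropriate); taking the $q$-th root produces the factor $1/2^{1/q}$, and rearranging the resulting inequality
\[
 \frac{\min(\abs{A}_\rho,\abs{X_i\setminus A}_\rho)^{1/q}}{2^{1/q}}\leq \kappa_i\,\abs{\partial A}_{\rho,X_i}
\]
and taking the infimum over admissible $A$ gives the claim.

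The main obstacle I anticipate is the limit $\|\nabla f_\epsilon\|_{L^1(\mu)}\to\abs{\partial A}_{\rho,X_i}$: it rests on the coarea formula together with the Lipschitz regularity of $\partial A$ (so that the perimeter measure coincides with $\mathcal{H}^{n-1}\lfloor\partial A$) and continuity of $\rho$ (so that the density can be pulled through the thin-shell average). The other steps --- applying PW to $f_\epsilon$, dominated convergence, and the elementary $L^q$ computation --- are routine once this limit is justified, which is presumably why the author defers to the essentially identical argument in \cite{BansilKitagawa19b}.
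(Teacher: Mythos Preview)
Your overall strategy---apply the $(q,1)$-PW inequality to a Lipschitz approximation $f_\epsilon$ of $\mathbf{1}_A$, pass to the limit, and read off the Cheeger-type bound---is the standard one and is presumably what the cited argument in \cite{BansilKitagawa19b} does. The coarea/limit step on the right-hand side and the dominated-convergence step on the left are handled correctly in outline.

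The genuine gap is in your ``last step.'' You assert that
\[
\|\mathbf{1}_A-\abs{A}_\rho\|_{L^q(\mu)}^q=(1-\abs{A}_\rho)^q\abs{A}_\rho+\abs{A}_\rho^{\,q}\abs{X_i\setminus A}_\rho\;\geq\;\tfrac{1}{2}\abs{A}_\rho,
\]
but you never prove this, and it is in fact \emph{false} for $q>2$: with $\mu(X_i)=1$ and $\abs{A}_\rho=\abs{X_i\setminus A}_\rho=\tfrac12$, the left-hand side equals $2\cdot(1/2)^{q+1}=2^{-q}$, which is strictly less than $\tfrac12\abs{A}_\rho=\tfrac14$ as soon as $q>2$. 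What \emph{does} follow from ``keeping the dominant summand'' together with $1-\abs{A}_\rho\geq\tfrac12$ is only
\[
(1-\abs{A}_\rho)^q\abs{A}_\rho\;\geq\;2^{-q}\abs{A}_\rho,
\]
hence $\|\mathbf{1}_A-\abs{A}_\rho\|_{L^q(\mu)}\geq\tfrac12\abs{A}_\rho^{1/q}$ and a Cheeger constant $\geq\tfrac{1}{2\kappa_i}$, which is \emph{weaker} than the lemma's $\tfrac{1}{2^{1/q}\kappa_i}$ for every $q>1$. So either a sharper lower bound on $\|\mathbf{1}_A-\abs{A}_\rho\|_{L^q}$ is needed (your hedge ``$2^{-q}\cdot(\text{something appropriate})$'' is exactly where the argument is incomplete), or your method only recovers the stated constant in the range $1\leq q\leq 2$; you should make precise which.

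A smaller point: you claim ``$\overline{A}$ is compact and strictly interior to $X_i$,'' but the hypothesis is only $A\subset\interior(X_i)$, which does not force $\overline{A}\subset\interior(X_i)$. This matters for identifying the limit of $\|\nabla f_\epsilon\|_{L^1(\mu)}$ with $\abs{\partial A}_{\rho,X_i}=\int_{\partial A\cap\interior(X_i)}\rho\,d\mathcal{H}^{n-1}$ rather than an integral over $\partial A\cap X_i$; you should either justify that the boundary contribution on $\partial X_i$ is handled, or work directly with BV/relative-perimeter approximation instead of the distance cutoff.
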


Recall from \cite{KitagawaMerigotThibert19} that $DG$ is negative semidefinite (as it is the Hessian of a concave function). Furthermore, recall that $\onevect$ is in the kernel of $DG$, so the largest eigenvalue of $DG$ is $0$. We now work toward the following estimate on the second largest eigenvalue.

\begin{thm}\label{thm: spec bound}

Suppose that $\mu$ satisfies a local $(q,1)$-PW inequality on $X$ with constant $\kappa$. Let $\psi \in \R^N$ be fixed. Assume that the subset sums of $G(\psi)$ and $\dv$ are separated i.e., $\subprop{{G(\psi)}} \cap S_{\dv} = \emptyset $. Then the second largest eigenvalue of $DG(\psi)$ is bounded away from zero. In particular it is bounded by $-\frac{2^{3-1/q}}{C_\nabla M^{1/q}N^4\kappa}d_{G(\psi)}^{1/q} < 0$ where $d_{G(\psi)} = \dist(\subprop{{G(\psi)}}, S_\dv) > 0$. 

\end{thm}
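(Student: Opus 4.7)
The plan is to prove the spectral gap via a Rayleigh quotient combined with a Cheeger-type argument that leverages the local PW inequalities on each piece $X_k$. Since $DG(\psi)$ is symmetric negative semidefinite with $\onevect$ in its kernel, its second largest eigenvalue equals $\sup\{\langle v, DG(\psi) v\rangle/\norm{v}^2 : v \perp \onevect,\, v\neq 0\}$, so it suffices to show $-\langle v, DG(\psi) v\rangle \geq c\,\norm{v}^2$ for all $v \perp \onevect$, with $c$ the stated constant. First I would rewrite, via the standard expression for $DG$ from \cite{KitagawaMerigotThibert19},
\[
-\langle v, DG(\psi)v\rangle = \sum_{i<j}(v^i-v^j)^2 \int_{\partial\Lag_i(\psi)\cap\partial\Lag_j(\psi)}\frac{\rho(x)}{\norm{\nabla_x c(x,y_i)-\nabla_x c(x,y_j)}}\,d\H^{n-1}(x),
\]
bound the denominator above by $2C_\nabla$, and split the boundary integral by intersecting with each $\interior(X_k)$ so that all the geometry lives inside a single piece.

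Next, reorder coordinates so $v^1\geq v^2\geq\cdots\geq v^N$ and convert the pairwise quadratic form into a layer-cake expression using the nested super-level sets $A_\ell := \bigcup_{i\leq \ell}\Lag_i(\psi)$ for $\ell=1,\dots,N-1$. Writing $v^i-v^j=\sum_{\ell=i}^{j-1}(v^\ell-v^{\ell+1})$ and applying Cauchy--Schwarz, the bilinear form is bounded below by a positive multiple of $\sum_\ell (v^\ell-v^{\ell+1})^2\,\sum_k |\partial A_\ell\cap\interior X_k|_\rho$; the loss contributes one of the $N^2$ factors in the final constant.

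Third, I would extract a Cheeger-type lower bound on $|\partial A_\ell|_\rho$. Let $\epsilon_{\ell,k}:=\min(\mu(A_\ell\cap X_k),\,\dv^k-\mu(A_\ell\cap X_k))$; Lemma \ref{lem: PW inequality bound} on $X_k$ gives $|\partial A_\ell\cap\interior X_k|_\rho\geq \epsilon_{\ell,k}^{1/q}/(2^{1/q}\kappa_k)$. The subset-sum hypothesis enters at this stage: if all $\epsilon_{\ell,k}$ were small, then approximating each $\mu(A_\ell\cap X_k)$ by whichever of $0$ or $\dv^k$ it is closer to produces an element of $S_\dv$ within distance $\sum_k\epsilon_{\ell,k}$ of $\mu(A_\ell)=\sum_{i\leq\ell}G^i(\psi)\in\subprop{G(\psi)}$. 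Hence $\sum_k\epsilon_{\ell,k}\geq d_{G(\psi)}$, and combining the per-piece PW estimates by a H\"older-type inequality across $k$ produces an interfacial lower bound of order $d_{G(\psi)}^{1/q}/(\kappa M^{1/q})$.

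Finally, I bound $\norm{v}^2$ from below by the telescoping jumps: Cauchy--Schwarz gives $(v^1-v^N)^2\leq (N-1)\sum_\ell (v^\ell-v^{\ell+1})^2$, and since $v\perp\onevect$ we have $v^1-v^N\geq \norm{v}_\infty\geq \norm{v}/\sqrt{N}$, yielding $\sum_\ell(v^\ell-v^{\ell+1})^2\geq \norm{v}^2/(N(N-1))$. Combining these ingredients produces the claimed bound, with the $N^4$ arising from the two Cauchy--Schwarz losses. The main obstacle will be the third step: extracting the constant $\kappa$ (the $q$-power mean) rather than the cruder $\max_k\kappa_k$ requires choosing the correct H\"older combination of the per-piece estimates, and verifying that the subset-sum separation gives a tight enough lower bound on the collective interfacial mass across all pieces simultaneously.
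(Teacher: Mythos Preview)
Your approach is sound and takes a genuinely different route from the paper's. The paper argues in two modular steps: first (Proposition~\ref{prop: connected subgraph}) it shows by contradiction that the weighted adjacency graph $W$ on $Y$ is connected by edges of weight at least $\frac{2^{1-1/q}}{C_\nabla M^{1/q}N^2\kappa}d_{G(\psi)}^{1/q}$ --- this contradiction argument is essentially your step~3 applied to a single cut $A$ --- and then invokes an existing spectral-graph result \cite[Theorem~8.1]{BansilKitagawa19b} to convert the edge-weight floor into an eigenvalue bound, picking up a second $N^2$. You bypass the graph abstraction entirely and work directly with the Rayleigh quotient via level sets $A_\ell$; this is more self-contained and, if tracked carefully, may even save a power of $N$ (your step~2 does not actually cost anything: since the sorted increments $t_\ell=v^\ell-v^{\ell+1}$ are nonnegative, $(\sum_{\ell=i}^{j-1}t_\ell)^2\geq\sum_{\ell=i}^{j-1}t_\ell^2$ holds without Cauchy--Schwarz, giving $\sum_{i<j}w_{ij}(v^i-v^j)^2\geq\sum_\ell t_\ell^2\,\mathrm{Cut}(\ell)$ with constant $1$).

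On the obstacle you flag in step~3: you are right that a direct lower bound on $\sum_k\epsilon_{\ell,k}^{1/q}/\kappa_k$ subject to $\sum_k\epsilon_{\ell,k}\geq d_{G(\psi)}$ yields only $\max_k\kappa_k$, since the concave objective is minimized at a vertex of the simplex. The device that recovers the $q$-power mean $\kappa$ is exactly the one the paper uses inside Proposition~\ref{prop: connected subgraph}: argue by contrapositive, exploiting that each piece's interfacial mass is bounded by the total. If $\abs{\partial A_\ell}_\rho<B$, then for every $k$ one has $\abs{\partial A_\ell\cap\interior X_k}_\rho<B$, whence $\epsilon_{\ell,k}<2\kappa_k^qB^q$ by Lemma~\ref{lem: PW inequality bound}; summing and using $\sum_k\kappa_k^q=M\kappa^q$ gives $d_{G(\psi)}\leq\sum_k\epsilon_{\ell,k}<2M\kappa^qB^q$, i.e.\ $\abs{\partial A_\ell}_\rho\geq 2^{-1/q}M^{-1/q}\kappa^{-1}d_{G(\psi)}^{1/q}$. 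That is the missing ``H\"older-type'' combination.
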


At this point, given some $\psi\in \R^N$ we recall the construction of $W$ from \cite[Section 5.3]{KitagawaMerigotThibert19}. $W$ is the simple weighted graph whose vertex set is the collection $Y$, and for any $y_i$ and $y_j$, $i\neq j$ there is an edge between $y_i, y_j$ of weight $w_{ij}$ given by
\begin{align*}
w_{ij}:=
D_i G^j(\psi)=D_jG^i(\psi)=\int_{\Lag_{i, j}(\psi)}\frac{\rho(x)}{\norm{\nabla_x c(x, y_i)-\nabla_xc(x, y_j)}}d\mathcal{H}^{n-1}(x)
\end{align*}
where we have used the notation 
\begin{align*}
\Lag_{i, j}(\psi):=\Lag_i(\psi)\cap \Lag_j(\psi)
\end{align*}
for $i$, $j\in \{1,\ldots, N\}$.
\begin{prop}\label{prop: connected subgraph}
Under the conditions of Theorem \ref{thm: spec bound}, $W$ is connected by edges of weight at least $\frac{2^{1-1/q}}{C_\nabla M^{1/q}N^2\kappa}d_{G(\psi)}^{1/q}$. 
\end{prop}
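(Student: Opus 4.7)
I will argue by contradiction. Suppose $W$ is not connected using only edges of weight at least $w_0 := \frac{2^{1-1/q}}{C_\nabla M^{1/q}N^2\kappa}\dQ^{1/q}$. Then there is a nontrivial partition $\{1,\dots,N\} = I \sqcup I^c$ with $I, I^c \neq \emptyset$ and every cross edge satisfying $w_{ij} < w_0$. Setting $A := \bigcup_{i \in I} \Lag_i(\psi)$, I observe that $\abs{A}_\rho = \sum_{i \in I} G^i(\psi) \in \subprop{G(\psi)}$ is a proper nonempty subset sum of $G(\psi)$.

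First I will quantify the $\rho$-weighted boundary of $A$. Since $\partial A$ is contained in $\bigcup_{i \in I,\, j \in I^c} \Lag_{i,j}(\psi)$ and $\norm{\nabla_x c(x,y_i)-\nabla_x c(x,y_j)} \leq 2 C_\nabla$, unwinding the definition of $w_{ij}$ yields
\begin{align*}
\sum_{k=1}^M \abs{\partial A}_{\rho, X_k} \leq 2 C_\nabla \sum_{i \in I,\, j \in I^c} w_{ij} < \frac{C_\nabla N^2}{2}\, w_0,
\end{align*}
where the last step uses $\abs{I}\abs{I^c} \leq N^2/4$. Then for each $k$, applying Lemma \ref{lem: PW inequality bound} to $B_k := A \cap X_k$ and setting $\epsilon_k := \min(\abs{B_k}_\rho, \abs{X_k \setminus B_k}_\rho)$ gives $\epsilon_k^{1/q} \leq 2^{1/q} \kappa_k \abs{\partial A}_{\rho, X_k}$. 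The step I anticipate to be most delicate is aggregating these local estimates so that the resulting constant features the $q$-power mean $\kappa$ rather than $\max_k \kappa_k$. I plan to handle this by first invoking the norm comparison $\bigl(\sum_k \epsilon_k\bigr)^{1/q} \leq \sum_k \epsilon_k^{1/q}$ (valid for $q \geq 1$), then applying H\"older's inequality with conjugate exponents $q$ and $q'$ to control $\sum_k \kappa_k \abs{\partial A}_{\rho, X_k}$, using $\bigl(\sum_k \kappa_k^q\bigr)^{1/q} = M^{1/q} \kappa$ together with $\bigl(\sum_k \abs{\partial A}_{\rho, X_k}^{q'}\bigr)^{1/q'} \leq \sum_k \abs{\partial A}_{\rho, X_k}$.

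Finally I will classify each piece $X_k$ as ``mostly in $A$'' or ``mostly out of $A$'' depending on whether $\abs{B_k}_\rho \geq \abs{X_k \setminus B_k}_\rho$; letting $J'$ collect the mostly-in indices, a direct computation gives $\bigl|\abs{A}_\rho - \sum_{k \in J'} \dv^k \bigr| \leq \sum_k \epsilon_k$ with $\sum_{k \in J'} \dv^k \in S_\dv$. Substituting the boundary bound and the value of $w_0$ into the aggregated Poincar\'e--Wirtinger estimate will produce $\sum_k \epsilon_k < \dQ$, which together with $\abs{A}_\rho \in \subprop{G(\psi)}$ contradicts the definition $\dQ = \dist(\subprop{G(\psi)}, S_\dv)$ and completes the argument.
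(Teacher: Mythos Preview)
Your proposal is correct and follows the same contradiction scheme as the paper: partition $W$, set $A=\bigcup_{i\in I}\Lag_i(\psi)$, bound $\abs{\partial A}_{\rho,X}$ via the cross edges, apply the PW lemma on each $X_k$, classify the pieces, and contradict the definition of $d_{G(\psi)}$. The only difference is in the aggregation step you flagged as delicate: the paper avoids H\"older entirely by using the cruder pointwise bound $\abs{\partial A}_{\rho,X_k}\leq \abs{\partial A}_{\rho,X}$ for every $k$, so that $\epsilon_k < \bigl(2^{1/q}\kappa_k\abs{\partial A}_{\rho,X}\bigr)^q$, and then sums the $q$-th powers directly via $\sum_k \kappa_k^q = M\kappa^q$ to obtain $\sum_k \epsilon_k < d_{G(\psi)}$; your H\"older route reaches the same bound and is equally valid.
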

\begin{proof}
	
	Suppose by contradiction that the proposition is false. This implies that removing all edges with weight strictly less than $\frac{2^{1-1/q}}{C_\nabla M^{1/q}N^2\kappa}d_{G(\psi)}^{1/q}$ yields a disconnected graph. In other words, we can write $W = W_1 \cup W_2$ where $W_1$, $W_2 \neq \emptyset$ and are disjoint, such that every edge connecting a vertex in $W_1$ to a vertex in $W_2$ has weight strictly less than $\frac{2^{1-1/q}}{C_\nabla M^{1/q}N^2\kappa}d_{G(\psi)}^{1/q}$.  Letting $A = \cup_{y_i \in W_1} \Lag_i(\psi)$ we see that
	\begin{align*}
	\abs{\partial A}_{\rho, X} 
	\leq \sum_{\{(i, j) \mid y_i\in W_1,\ y_j\in W_2\}} 2C_\nabla w_{ij}
	< \frac{2^{2-1/q}}{M^{1/q}N^2\kappa}d_{G(\psi)}^{1/q} \abs{W_1}\abs{W_2} 
	\leq \frac{2^{2-1/q}}{M^{1/q}N^2\kappa}d_{G(\psi)}^{1/q} \frac{N^2}{4}
	= \frac{2^{-1/q}}{M^{1/q}\kappa}d_{G(\psi)}^{1/q}.
	\end{align*}
	Now for each $i$ we claim that $\min(\abs{A \cap X_i}_{\rho}, \abs{X_i \setminus A}_{\rho}) < \frac{\kappa_i^q}{M\kappa^q}d_{G(\psi)}$. If $\abs{A \cap X_i}_{\rho} = 0$ or $\abs{X_i \setminus A}_{\rho} = 0$ then the claim is trivial. Otherwise the claim follows from Lemma \ref{lem: PW inequality bound}, after noting that $\abs{\partial A}_{\rho, X_i} \leq \abs{\partial A}_{\rho, X}$. 
	
	Now note that by construction
 	\begin{align}
	\mu(A) = \sum_{y_i \in W_1} \mu(\Lag_i(\psi)) = \sum_{y_i \in W_1} G(\psi)^i \in S_{G(\psi)}. \label{eqn: mu of A is subset sum}
	\end{align}
	On the other hand we can partition the $X_i$ into two types. Let $I$ be the index set so that $i \in I$ if and only if $\abs{X_i \setminus A}_{\rho} < \frac{\kappa_i^q}{M\kappa^q}d_{G(\psi)}$. We see that
 	\begin{align*}
	\mu(A) 
	= \sum_{i=1}^M \mu(A \cap X_i)
	= \sum_{i \in I} \mu(A \cap X_i) + \sum_{i \not\in I} \mu(A \cap X_i)
	= \sum_{i \in I} \mu(X_i) - \mu(X_i \setminus A) + \sum_{i \not\in I} \mu(A \cap X_i).
	\end{align*}
	 Hence
 	\begin{align*}
	\abs{\mu(A) - \sum_{i \in I} \mu(X_i)}
	&\leq \sum_{i\in I } \mu(X_i \setminus A) + \sum_{i \not\in I} \mu(A \cap X_i) \\
	&< \sum_{i\in I } \frac{\kappa_i^q}{M\kappa^q}d_{G(\psi)} + \sum_{i \not\in I} \frac{\kappa_i^q}{M\kappa^q}d_{G(\psi)}
	= \frac{1}{M\kappa^q}\sum_{i=1}^M {\kappa_i^q}d_{G(\psi)}
	= \dQ.
	\end{align*}
	By definition $\sum_{i \in I} \mu(X_i) \in S_\dv$. Furthermore since both $W_1$ and $W_2$ are nonempty we have $\mu(A) \in \subprop{{G(\psi)}}$, recalling \eqref{eqn: mu of A is subset sum}. Hence we see that
 	\begin{align*}
	\dQ = \dist(\subprop{{G(\psi)}}, S_\dv  ) < \dQ
	\end{align*}
	which is a clear contradiction. 
\end{proof}

At this point Theorem \ref{thm: spec bound} follows via the exact same method of proof as \cite[Theorem 8.1]{BansilKitagawa19b}. We omit the proof. 

\section{Convergence of Newton Algorithm}\label{sec: convergence newton}

First we recall the standard damped Newton Algorithm (similar to the one proposed in \cite{KitagawaMerigotThibert19}).

\begin{algorithm}[H]
	
	\DontPrintSemicolon
	
	\KwIn{A tolerance $\zeta > 0$ and an initial $\psi_0\in
		\R^N$.}
	
	\While{$\norm{\nabla  \Phi(\psi_k)}  \geq \zeta$}
	{
		\begin{description}
			\item[Step 1] Compute $\vec{d}_k = - [D^2 \Phi(\psi_k)]^{-1} (\nabla \Phi(\psi_k))$
			\item [Step 2] Determine the minimum $\ell \in \N$ such that $\psi_{k+1, \ell} :=	\psi_k + 2^{-\ell} \vec{d}_k $ satisfies
			\begin{equation*}
			\begin{aligned}
			&\norm{\nabla \Phi(\psi_{k+1, \ell})} \leq (1-2^{-(\ell+1)}) \norm{\nabla \Phi(\psi_k)}.
			\end{aligned}
			\end{equation*}
			\item [Step 3] Set $\psi_{k+1} = \psi_k + 2^{-\ell}  \vec{d}_k$ and $k\gets k+1$.
			
		\end{description}	
	}	
	\caption{Damped Newton's algorithm}
	
	\label{alg: damped newton}
\end{algorithm}

We remark that unlike the algorithm proposed in \cite{KitagawaMerigotThibert19} we do not impose a condition on the cells not collapsing. This is because we will prove local convergence for the above algorithm and within the zone of convergence the error reduction requirement will automatically imply that the cells don't collapse. 

Our main result for this section is that Algorithm \ref{alg: damped newton} converges locally with superlinear rate. 

\begin{thm}\label{thm: convergence of algo}
	
	Suppose that $\mu$ satisfies a local $(q,1)$-PW inequality on $X$. If $\norm{\nabla{\Phi(\psi_0)}} \leq \frac{d_{\wv}}{2\sqrt{N}}$ then Algorithm \ref{alg: damped newton} converges with linear rate and locally with rate $1+\alpha$. 
	
\end{thm}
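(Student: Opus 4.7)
The plan is to adapt the Newton convergence framework of \cite{KitagawaMerigotThibert19, BansilKitagawa19b}, now using Theorem \ref{thm: spec bound} as the replacement strict-concavity estimate. The essential new ingredient is that the hypothesis $\|\nabla\Phi(\psi_0)\|\leq d_\wv/(2\sqrt N)$ is tight enough to propagate the separation-of-subset-sums condition along every iterate, even with no connectedness of $\spt\mu$. Indeed, since $\nabla\Phi(\psi)=G(\psi)-\wv$ and any subset sum of $G(\psi)$ differs from the corresponding subset sum of $\wv$ by at most $\|G(\psi)-\wv\|_1\leq\sqrt N\,\|\nabla\Phi(\psi)\|$, the triangle inequality gives
\[
d_{G(\psi)}\ \geq\ d_\wv-\sqrt N\,\|\nabla\Phi(\psi)\|\ \geq\ \tfrac12 d_\wv
\]
as soon as $\|\nabla\Phi(\psi)\|\leq d_\wv/(2\sqrt N)$. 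Since Step~2 of Algorithm \ref{alg: damped newton} enforces a strict decrease of $\|\nabla\Phi\|$, an induction shows every iterate lies in this region, and hence Theorem \ref{thm: spec bound} furnishes a negative upper bound on the second eigenvalue of $DG(\psi_k)$ that is \emph{uniform in $k$}.

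For the global linear rate, I would bound the backtracking index $\ell$ from above. Using the $\alpha$-H\"older continuity of $DG$ (which follows from the H\"older density $\rho$ and the explicit $\mathcal H^{n-1}$-integral representation of the off-diagonal entries $w_{ij}$, in the style of \cite[Prop.~3.2]{KitagawaMerigotThibert19}), I would Taylor expand $\nabla\Phi$ along the Newton direction $\vec d_k$ and verify that the acceptance inequality in Step~2 holds as soon as $2^{-\ell}$ falls below an explicit threshold depending only on the uniform spectral gap, $C_\nabla$, the H\"older constant of $\rho$, and $d_\wv$. This yields a uniform cap $\ell\leq L$ independent of $k$, and hence linear contraction $\|\nabla\Phi(\psi_{k+1})\|\leq(1-2^{-(L+1)})\|\nabla\Phi(\psi_k)\|$ throughout the initial region.

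For the local rate $1+\alpha$, once $\|\nabla\Phi(\psi_k)\|$ drops below a second explicit threshold, the choice $\ell=0$ already meets Step~2 and the iteration reduces to the full Newton method. Plugging the $\alpha$-H\"older Hessian into the standard expansion
\[
\nabla\Phi(\psi_{k+1})\ =\ \nabla\Phi(\psi_k)+DG(\psi_k)\vec d_k+O(\|\vec d_k\|^{1+\alpha}),
\]
noting that the first two terms cancel by definition of the Newton direction, and bounding $\|\vec d_k\|$ via the uniform pseudoinverse estimate from the spectral gap, then yields $\|\nabla\Phi(\psi_{k+1})\|\leq C\|\nabla\Phi(\psi_k)\|^{1+\alpha}$.

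The principal obstacle will be the quantitative backtracking estimate: one must simultaneously track the H\"older constant of $DG$, the uniform spectral gap from Theorem \ref{thm: spec bound}, and the geometry of the Laguerre faces to produce an explicit, problem-data-dependent threshold for $\ell$. This is the mechanical heart of the argument and closely parallels \cite[Sec.~6]{KitagawaMerigotThibert19}. The structural advantage of our setup is that the invariance step depends only on the arithmetic quantity $d_\wv$ rather than on any path-connectedness of $\spt\mu$, which is precisely where the connected-support analysis of \cite{KitagawaMerigotThibert19} breaks down on disconnected domains.
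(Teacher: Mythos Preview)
Your proposal is correct and mirrors the paper's own argument: the stability estimate $d_{G(\psi)}\geq d_\wv-\|G(\psi)-\wv\|_1$ is exactly Lemma~\ref{lem: d stab}, its consequence that the region $\{\|\nabla\Phi\|\leq d_\wv/(2\sqrt N)\}$ is invariant and carries a uniform spectral gap and $C^{2,\alpha}$ bound is Proposition~\ref{prop: well conditioned}, and the Taylor-remainder backtracking analysis you outline is carried out verbatim in Proposition~\ref{prop: Newt rate}. The only detail you leave implicit is that one must also control the \emph{intermediate} points $\psi_\tau$ along the Newton segment (the paper does this via an exit-time argument from $\mathcal K^{\delta/2}$), but you correctly flag this as the ``mechanical heart'' handled as in \cite[Sec.~6]{KitagawaMerigotThibert19}.
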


We start with a simple lemma. 

\begin{lem} \label{lem: d stab}

For $\wv_1, \wv_2 \in \wvs$, we have $\abs{d_{\wv_1} - d_{\wv_2}} \leq \norm{\wv_1 - \wv_2}_1$.

\end{lem}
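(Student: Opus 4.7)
The plan is to exploit the fact that both $\subprop{\wv_1}$ and $\subprop{\wv_2}$ are indexed by the same collection of nonempty proper subsets $A\subsetneq\{1,\ldots,N\}$, so perturbation estimates can be done subset-by-subset before passing to distances.

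First I would set up the natural bijection on subset sums: for each admissible index set $A$, the sum $s_A(\wv) := \sum_{i\in A}\wv^i$ is a linear functional of $\wv$, and $|s_A(\wv_1)-s_A(\wv_2)| \leq \sum_{i\in A}|\wv_1^i-\wv_2^i| \leq \norm{\wv_1-\wv_2}_1$. This shows that every element of $\subprop{\wv_1}$ lies within $\norm{\wv_1-\wv_2}_1$ of some element of $\subprop{\wv_2}$ (namely, the one coming from the same subset $A$), and vice versa; in other words the Hausdorff distance between $\subprop{\wv_1}$ and $\subprop{\wv_2}$ is at most $\norm{\wv_1-\wv_2}_1$.

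Next I would convert this Hausdorff bound into a bound on the distances to the fixed set $S_\dv$. Fix any $a_1\in\subprop{\wv_1}$, arising from some admissible $A$, and any $b\in S_\dv$. Let $a_2 := s_A(\wv_2)\in\subprop{\wv_2}$. Then by the triangle inequality
\begin{align*}
d_{\wv_2} \leq |a_2-b| \leq |a_2-a_1| + |a_1-b| \leq \norm{\wv_1-\wv_2}_1 + |a_1-b|.
\end{align*}
Taking the infimum over $a_1\in\subprop{\wv_1}$ and $b\in S_\dv$ gives $d_{\wv_2}\leq d_{\wv_1}+\norm{\wv_1-\wv_2}_1$. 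By symmetry $d_{\wv_1}\leq d_{\wv_2}+\norm{\wv_1-\wv_2}_1$, and the lemma follows.

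There is really no obstacle here; the only thing to be careful about is that we use the \emph{same} index set $A$ on both sides so that $a_2$ is genuinely a proper (nonempty) subset sum of $\wv_2$, keeping us in $\subprop{\wv_2}$. Because the ``properness'' of $A$ depends only on $A$ and not on the vector $\wv$, this is automatic. The whole argument is really just the statement that the map $\wv\mapsto\subprop{\wv}$ is $1$-Lipschitz from $(\R^N,\norm{\cdot}_1)$ into the Hausdorff metric, composed with the standard $1$-Lipschitz dependence of $\dist(\cdot,S_\dv)$ on its first argument.
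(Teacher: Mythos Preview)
Your proof is correct and follows essentially the same route as the paper: pair each proper subset sum of $\wv_1$ with the subset sum of $\wv_2$ coming from the \emph{same} index set, bound the difference by $\norm{\wv_1-\wv_2}_1$, then pass to $\dist(\cdot,S_\dv)$ via the triangle inequality and conclude by symmetry. Your phrasing in terms of the Hausdorff distance between $\subprop{\wv_1}$ and $\subprop{\wv_2}$ is a slightly cleaner packaging of the same estimate, but there is no substantive difference.
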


\begin{proof}

Let $x \in \subprop{\wv_1}$. We claim that $\dist(x, \subprop{\wv_2}) \leq \norm{\wv_1 - \wv_2}_1$. Indeed if $x = \sum_{i \in I} \lambda_1^i$ then 
 \begin{align*}
\abs{x - \sum_{i \in I} \lambda_2^i} 
= \abs {\sum_{i \in I} (\lambda_1^i - \lambda_2^i)}
\leq \sum_{i \in I} \abs{\lambda_1^i - \lambda_2^i}
\leq \norm{\wv_1 - \wv_2}_1.
\end{align*}
If we apply this to $x = \argmin_{\ti x \in \subprop{\wv_1}} \dist(\ti x, S_\dv)$ then we get that there is $y \in \subprop{\wv_2}$ so that $\abs{x-y} \leq \norm{\wv_1 - \wv_2}_1$. Hence
 \begin{align*}
\dist(\subprop{\wv_2}, S_{\dv}) 
&\leq \dist(y, S_{\dv}) \\
&\leq \abs{x-y} + \dist(x, S_{\dv}) \\
&= \dist(\subprop{\wv_1}, S_{\dv}) + \abs{x-y} \\
&\leq \dist(\subprop{\wv_1}, S_{\dv}) + \norm{\wv_1 - \wv_2}_1
\end{align*}
and so we get $d_{\wv_2} - d_{\wv_1} = \dist(\subprop{\wv_2}, S_{\dv}) - \dist(\subprop{\wv_1}, S_{\dv}) \leq \norm{\wv_1 - \wv_2}_1$. A symmetric argument proves the opposite inequality. 

\end{proof}

With the above lemma, Theorem \ref{thm: spec bound} gives us that $\Phi$ is locally well-conditioned near its maximum. 

\begin{prop}\label{prop: well conditioned}
	
Suppose that $\mu$ satisfies a local $(q,1)$-PW inequality on $X$ and $\eta < d_\wv$. On the set $\mathcal{J^\eta} = \{ \psi \in \R^N: \norm{\nabla{\Phi(\psi)}}_1 < \eta \}$ we have that $\Phi$ is uniformly $C^{2, \alpha}$ and strongly concave, except in the direction $\ov$. In particular on $\mathcal{J^\eta}$
 \begin{align*}
\norm{\Phi}_{C^{2, \alpha}} < C (d_{\wv} - \eta)^{-2} 
\end{align*}
and
 \begin{align*}
D^2 \Phi \leq -\frac{2^{3-1/q}}{C_\nabla M^{1/q}N^4\kappa}(d_{\wv} - \eta)^{1/q} P
\end{align*}
where $P$ is the orthogonal projection onto the hyperplane perpendicular to $\ov$ and $C$ is a constant depending only on $\norm{\rho}_\infty, \diam(X)$, and $c$ (the cost). 

\end{prop}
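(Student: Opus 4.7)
The plan is to combine the spectral bound of Theorem \ref{thm: spec bound} with an existing $C^{2,\alpha}$-regularity bound for $\Phi$, each activated by the hypothesis $\norm{\nabla \Phi(\psi)}_1 < \eta$ via a simple perturbation argument based on Lemma \ref{lem: d stab}.

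The spectral half is essentially a direct corollary. I would first observe that $\nabla \Phi(\psi) = G(\psi) - \wv$, so on $\mathcal{J^\eta}$ the $\ell^1$-distance $\norm{G(\psi) - \wv}_1$ is less than $\eta$. Lemma \ref{lem: d stab} then yields $d_{G(\psi)} \geq d_\wv - \eta > 0$, whence Theorem \ref{thm: spec bound} gives the desired bound on the second largest eigenvalue of $D^2\Phi(\psi) = DG(\psi)$. Since $DG$ is symmetric, negative semidefinite, and its $0$-eigenspace is exactly $\mathrm{span}(\ov)$, the second-eigenvalue bound translates directly into the matrix inequality $D^2 \Phi \leq -\frac{2^{3-1/q}}{C_\nabla M^{1/q}N^4\kappa}(d_\wv - \eta)^{1/q} P$.

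For the $C^{2,\alpha}$ bound, the strategy is to reduce to a non-collapsing regime where the classical regularity estimates apply. The key observation is that $0 \in S_\dv$ (the empty subset sum), while each singleton $\{\wv^i\}$ is a nonempty proper subset since $N \geq 2$, so $\wv^i \in \subprop{\wv}$ and therefore $\wv^i \geq d_\wv$ for every $i$. Combining this with $\norm{G(\psi) - \wv}_\infty \leq \norm{G(\psi) - \wv}_1 < \eta$ gives $G^i(\psi) \geq d_\wv - \eta$, i.e., every Laguerre cell on $\mathcal{J^\eta}$ has mass bounded below by $d_\wv - \eta$. One then invokes the Hölder regularity estimates for $DG$ in the non-collapsing regime (cf. \cite{KitagawaMerigotThibert19}), whose constant degrades like the inverse square of the minimum cell mass; the bound $C(d_\wv - \eta)^{-2}$ follows, with the constant $C$ depending only on $\norm{\rho}_\infty$, $\diam(X)$, and $c$.

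The main obstacle is the $C^{2,\alpha}$ estimate rather than the spectral one: while the matrix inequality falls out immediately, the Hölder bound requires carefully tracking how the minimum cell mass enters the regularity constants in the existing literature. This is mostly bookkeeping, but one must verify that the non-collapsing bound $G^i(\psi) \geq d_\wv - \eta$ propagates through the proof of Hölder continuity of $DG$ to produce precisely the inverse-square scaling claimed.
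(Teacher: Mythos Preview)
Your proposal is correct and follows essentially the same approach as the paper: the paper also derives $\wv^i \geq d_\wv$ from $0 \in S_\dv$ and $\wv^i \in \subprop{\wv}$, uses $\norm{G(\psi)-\wv}_\infty \leq \norm{\nabla\Phi(\psi)}_1 < \eta$ to conclude $\mathcal{J}^\eta \subset \mathcal{K}^{d_\wv - \eta}$ and then cites the regularity estimate from \cite{KitagawaMerigotThibert19}, and obtains the spectral inequality by combining Lemma~\ref{lem: d stab} with Theorem~\ref{thm: spec bound}. The only difference is the order in which the two claims are handled, which is immaterial.
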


\begin{proof}

Fix $\psi \in \mathcal{J^\eta}$. Since $0 \in S_{\dv}$ and each $\wv^i \in \subprop{\wv}$, we have that each $\wv^i \geq d_{\wv}$. Furthermore since 
 \begin{align*}
\abs{G^i(\psi) - \wv^i} \leq \norm{\nabla{\Phi(\psi)}}_\infty \leq \norm{\nabla{\Phi(\psi)}}_1 < \eta
\end{align*}
we obtain that $G^i(\psi) - \wv^i > -\eta$ and so $G^i(\psi) > d_{\wv} - \eta$. In other words 
\begin{align}
\mathcal{J}^\eta \subset \mathcal{K}^{d_{\wv} - \eta}. \label{eqn: J subset of K}
\end{align}
Hence the first result follows by careful tracing through the proof of \cite[Theorem 4.1]{KitagawaMerigotThibert19}. 

Now for the second claim Lemma \ref{lem: d stab} gives that
 \begin{align*}
\abs{d_{G(\psi)} - d_{\wv}} \leq \norm{G(\psi) - \wv}_1 = \norm{\nabla{\Phi(\psi)}}_1 \leq \eta.
\end{align*}
In particular $d_{G(\psi)} \geq d_{\wv} - \eta$. The second result now follows from Theorem \ref{thm: spec bound}. 

\end{proof}

We briefly note that this proposition gives us uniqueness of the maximizer of $\Phi$ up to a multiple of $\onevect$. 

\begin{cor}\label{cor: unique max}
	
If $\psi_1, \psi_2$ are two maximizers of $\Phi$ then there is $r \in \R$ so that $\psi_1 - \psi_2 = r \onevect$. 
\end{cor}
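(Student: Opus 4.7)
The plan is to combine concavity of $\Phi$ with the strong-concavity statement of Proposition~\ref{prop: well conditioned}. Since $\Phi$ is concave (its Hessian $DG$ is negative semidefinite, as already noted), if $\psi_1$ and $\psi_2$ are both maximizers and we set $v := \psi_1-\psi_2$ and $g(t) := \Phi(\psi_2+tv)$, then $g$ is a concave function on $[0,1]$ with $g(0)=g(1)=\max\Phi$. Therefore $g\equiv \max\Phi$ on $[0,1]$, which means every point of the segment joining $\psi_2$ to $\psi_1$ is also a maximizer. In particular $g$ is constant, so $g''(t) = v^\top D^2\Phi(\psi_2+tv)\,v = 0$ for every $t\in(0,1)$.

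Next I would apply Proposition~\ref{prop: well conditioned} pointwise along this segment. At any maximizer $\psi$ one has $\nabla\Phi(\psi)=0$, and hence $\norm{\nabla\Phi(\psi)}_1=0<\eta$ for any choice $\eta\in(0,d_\wv)$ (for concreteness, take $\eta = d_\wv/2$). Thus the entire segment lies inside $\mathcal{J}^\eta$, and Proposition~\ref{prop: well conditioned} furnishes a constant $C_\eta>0$ with
\begin{align*}
D^2\Phi(\psi_2+tv) \leq -C_\eta\, P,
\end{align*}
where $P$ is the orthogonal projection onto the hyperplane perpendicular to $\onevect$. Combining with $g''(t)=0$ gives
\begin{align*}
0 \;=\; v^\top D^2\Phi(\psi_2+tv)\, v \;\leq\; -C_\eta\,\norm{Pv}^2,
\end{align*}
so $Pv=0$, i.e.\ $v$ is a scalar multiple of $\onevect$, as claimed.

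There is essentially no obstacle: the only subtlety worth flagging is the verification that the segment is contained in $\mathcal{J}^\eta$ so that Proposition~\ref{prop: well conditioned} may be invoked; this is automatic because concavity forces every point of the segment to be a maximizer and hence a critical point of $\Phi$. Conceptually, the argument is just the classical observation that the kernel of the Hessian at an interior maximum of a concave function controls the directions along which the maximum is non-unique, and Proposition~\ref{prop: well conditioned} tells us that this kernel is exactly $\mathrm{span}(\onevect)$.
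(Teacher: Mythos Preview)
Your argument is correct and follows essentially the same route as the paper: both use concavity to see that the whole segment between $\psi_1$ and $\psi_2$ consists of maximizers, then invoke Proposition~\ref{prop: well conditioned} to get $D^2\Phi\le -C_\eta P$ there and conclude $P(\psi_1-\psi_2)=0$. The only cosmetic difference is that the paper differentiates $\nabla\Phi$ along the segment to obtain the vector identity $D^2\Phi(\psi_1)(\psi_2-\psi_1)=0$ before pairing with $\psi_2-\psi_1$, whereas you go straight to the scalar identity $g''(t)=v^\top D^2\Phi\,v=0$; either way the conclusion is reached via the same quadratic-form inequality.
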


\begin{proof}

Since $\Phi$ is concave and $\psi_1, \psi_2$ are two maximizers, $\Phi$ must be constant along the line segment joining $\psi_1$ and $\psi_2$. In particular if $t \in [0,1]$ then $\Phi(t \psi_1 + (1-t) \psi_2) = \Phi(\psi_1)$ and so $t \psi_1 + (1-t) \psi_2$ is a maximizer of $\Phi$. Hence $\nabla \Phi(t \psi_1 + (1-t) \psi_2) = 0$ for all $t \in [0,1]$. In particular we can conclude $[D^2\Phi(\psi_1)](\psi_2-\psi_1) = 0$. 

Now since $\psi_1$ is a maximizer we have $\nabla \Phi(\psi_1) = 0$ so the conditions of Proposition \ref{prop: well conditioned} are satisfied. Hence we have $D^2 \Phi(\psi_1) \leq -\frac{2^{3-1/q}}{C_\nabla M^{1/q}N^4\kappa}(d_{\wv} - \eta)^{1/q} P$ where $P$ is the orthogonal projection onto the hyperplane perpendicular to $\ov$. Hence we get $P(\psi_2-\psi_1) = 0$ which implies the claim. 

\end{proof}

\begin{rmk}\label{rmk: unique max constr}

Given any maximizer, $\psi$, of $\Phi$ we see that $\psi + r \onevect$ is also a maximizer. In particular given any maximizer, $\psi$, we can construct the maximizer $\psi_{max} := \psi - (\sum_{i=1}^N \psi^i) \onevect$ which is a maximizer of $\Phi$ that satisfies $\sum_{i=1}^N \psi_{max}^i = 0$. 

Conversely \ref{cor: unique max} tells us that there is a unique $\psi_{max}$ that maximizes $\Phi$ and satisfies $\sum_{i=1}^N \psi_{max}^i = 0$. For the remainder of the paper we shall refer to this unique maximizer as $\psi_{max}$. 

\end{rmk}

It is well-known that Proposition \ref{prop: well conditioned} gives local $1+\alpha$ convergence of the standard damped Newton Algorithm. However for convenience of the reader we will give a self contained proof. 

\begin{prop} \label{prop: Newt rate}
	
	If $\norm{\nabla{\Phi(\psi_0)}} < \frac{d_{\wv}}{2\sqrt{N}}$ then the iterates of Algorithm \ref{alg: damped newton} satisfy 
 	\begin{align*}
	\norm{\nabla \Phi(\psi_{k+1})} \leq (1 - \frac{\conj \tau_k}{2}) \norm{\nabla \Phi(\psi_{k})}
	\end{align*}
	where
 	\begin{align*}
	\conj \tau_k = \min(\frac{\gammaN^{1+\frac{1}{\alpha}}\delta}{L^{\frac{1}{\alpha}} \norm{{\nabla \Phi}(\psi)}2^{\frac{1}{\alpha}}}, 1)
	\end{align*}
	and
 	\begin{align*}
	\delta &:= \frac 12 \min \wv^i \\
	L &\leq C \delta^{-2} \\
	\gammaN &\geq \frac{2^{3-2/q}}{C_\nabla M^{1/q}N^4\kappa}d_{\wv}^{1/q}
	\end{align*}
	where $C$ is a constant depending only on $\norm{\rho}_\infty, \diam(X)$, and $c$ (the cost).

	Furthermore once we have $\conj \tau_k =1$ we get
 	\begin{align*}
	\norm{\nabla \Phi(\psi_{k+1})} \leq \frac{L\norm{{\nabla \Phi}(\psi)}^{1 + \alpha}}{\gammaN^{1+\alpha}}.
	\end{align*}
\end{prop}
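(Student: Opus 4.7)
The plan is to carry out a standard damped Newton convergence analysis, using Proposition \ref{prop: well conditioned} as the source of smoothness and strong-concavity bounds. Setting $\delta := \tfrac12 \min_i \wv^i$ and $\eta := d_\wv - \delta$, I would first note that $d_\wv \leq \wv^i$ for each $i$ (since each $\wv^i \in \subprop{\wv}$ and $0 \in S_\dv$) forces $\delta \leq d_\wv/2$; combined with the hypothesis, $\|\nabla\Phi(\psi_0)\|_1 \leq \sqrt N \|\nabla\Phi(\psi_0)\| < d_\wv/2 \leq \eta$, so $\psi_0 \in \mathcal{J}^\eta$. Proposition \ref{prop: well conditioned} then supplies uniform bounds $L \leq C\delta^{-2}$ and $\gammaN \geq \frac{2^{3-2/q}}{C_\nabla M^{1/q}N^4\kappa} d_\wv^{1/q}$ on $\mathcal{J}^\eta$ (using $(d_\wv - \eta)^{1/q} = \delta^{1/q} \geq (d_\wv/2)^{1/q}$).

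Since $\nabla\Phi(\psi_k) = G(\psi_k) - \wv$ is orthogonal to $\onevect$, the Newton direction $\vec d_k = -[D^2\Phi(\psi_k)]^{-1}\nabla\Phi(\psi_k)$, interpreted via the pseudo-inverse on $\onevect^\perp$ (where $D^2\Phi$ is strictly negative definite by Proposition \ref{prop: well conditioned}), is well defined and satisfies $\|\vec d_k\| \leq \gammaN^{-1}\|\nabla\Phi(\psi_k)\|$. Applying the fundamental theorem of calculus to $\nabla\Phi$ along $t \mapsto \psi_k + t \vec d_k$ and using the H\"older continuity of $D^2\Phi$, I would derive
\begin{align*}
\nabla\Phi(\psi_{k+1,\ell}) = (1-2^{-\ell}) \nabla\Phi(\psi_k) + R_\ell, \qquad \|R_\ell\| \leq \tfrac{L}{1+\alpha}\, 2^{-\ell(1+\alpha)} \|\vec d_k\|^{1+\alpha},
\end{align*}
valid whenever the segment $\{\psi_k + t\vec d_k : t \in [0, 2^{-\ell}]\}$ lies in $\mathcal{J}^\eta$.

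Confinement is established by a bootstrap argument. For small enough $2^{-\ell}$, the Lipschitz estimate $\|\nabla\Phi(\psi_k + t\vec d_k) - \nabla\Phi(\psi_k)\| \leq Lt\|\vec d_k\|$ keeps the whole segment inside $\mathcal{J}^\eta$; and once the Armijo-type decrease at step $k$ is achieved, the resulting $\psi_{k+1}$ has strictly smaller gradient norm, so by induction on $k$ the orbit remains in $\mathcal{J}^\eta$. The smallest $\ell$ for which $\|R_\ell\| \leq 2^{-(\ell+1)} \|\nabla\Phi(\psi_k)\|$ satisfies $2^{-\ell} \geq c\, \gammaN^{1+1/\alpha}/(L^{1/\alpha}\|\nabla\Phi(\psi_k)\|)$ for an explicit $c$; the extra factor $\delta$ in the stated $\conj\tau_k$ arises from also enforcing the confinement constraint $2^{-\ell}\|\vec d_k\| \leq c'\delta/L$ that keeps the segment in $\mathcal{J}^\eta$. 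Combining the two gives $2^{-\ell} \geq \conj\tau_k$, from which the claimed linear bound $\|\nabla\Phi(\psi_{k+1})\| \leq (1 - \conj\tau_k/2) \|\nabla\Phi(\psi_k)\|$ follows. Finally, once $\conj\tau_k = 1$ the undamped Newton step is accepted, and substituting $\ell = 0$ into the Taylor estimate directly yields the $(1+\alpha)$-rate $\|\nabla\Phi(\psi_{k+1})\| \leq L\gammaN^{-(1+\alpha)}\|\nabla\Phi(\psi_k)\|^{1+\alpha}$.

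The main obstacle will be reconciling the two competing constraints on $2^{-\ell}$ precisely enough to recover the stated $\conj\tau_k$: the Armijo inequality (which drives the eventual superlinear rate) and the forward-invariance of $\mathcal{J}^\eta$ (which is what makes the Proposition \ref{prop: well conditioned} bounds valid along the segment in the first place). The Armijo condition alone would give a slightly cleaner-looking formula; it is the extra side constraint needed to stay inside $\mathcal{J}^\eta$ that accounts for the $\delta$ in the numerator of $\conj\tau_k$.
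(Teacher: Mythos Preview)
Your overall architecture is exactly the paper's: bound the Newton direction via the pseudo-inverse, do a first-order Taylor expansion of $\nabla\Phi$ along the step with a H\"older remainder, control the exit time from the regularity region to get the confinement factor, and read off the Armijo threshold and then the $(1+\alpha)$-rate once the full step is admissible.

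However, your parameter bookkeeping in the first paragraph breaks. From $d_\wv \leq \wv^i$ for every $i$ you get $d_\wv \leq \min_i \wv^i = 2\delta$, i.e.\ $\delta \geq d_\wv/2$, \emph{not} $\delta \leq d_\wv/2$. With the correct inequality your choice $\eta = d_\wv - \delta$ satisfies $\eta \leq d_\wv/2$ (and can even be nonpositive), so the chain $\|\nabla\Phi(\psi_0)\|_1 < d_\wv/2 \leq \eta$ fails and you cannot place $\psi_0$ in $\mathcal{J}^\eta$. Consequently the single set $\mathcal{J}^\eta$ cannot simultaneously (i) contain the iterate, (ii) deliver the concavity bound with $(d_\wv-\eta)^{1/q}\geq (d_\wv/2)^{1/q}$, and (iii) give the $C^{2,\alpha}$ bound $C\delta^{-2}$.

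The paper resolves this by decoupling the two roles. It takes $\eps := d_\wv/2$ so that the hypothesis places $\psi_k$ in $\mathcal{J}^\eps$, yielding the concavity constant $\gammaN \geq \frac{2^{3-2/q}}{C_\nabla M^{1/q}N^4\kappa}d_\wv^{1/q}$; and it uses the larger set $\mathcal{K}^{\delta/2}$ as the confinement region for the Taylor step, where the $C^{1,\alpha}$ bound $L \leq C\delta^{-2}$ holds directly. Because $\eps = d_\wv/2 \leq \delta$ one has $\mathcal{J}^\eps \subset \mathcal{K}^\eps \subset \mathcal{K}^\delta$, and the exit time $\tau_1$ from $\mathcal{K}^{\delta/2}$ is bounded below by $\gammaN\delta/(2L\|\nabla\Phi(\psi_k)\|)$ via $\|G(\psi_{\tau_1})-G(\psi_k)\|\geq \delta/2$. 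This is precisely where the $\delta$ in $\conj\tau_k$ comes from; your attribution of it to ``staying in $\mathcal{J}^\eta$'' is morally right but the set should be $\mathcal{K}^{\delta/2}$. With that adjustment the rest of your sketch goes through verbatim.
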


\begin{proof}
	
	The proof is very similar to that of \cite[Proposition 6.1]{KitagawaMerigotThibert19}. However we will give all the details. 
	
	Note that $\norm{\nabla{\Phi(\psi_0)}} < \frac{d_{\wv}}{2\sqrt{N}}$ implies $\norm{\nabla{\Phi(\psi_0)}}_1 < \frac{d_{\wv}}{2}$. 
	
	For this proof define $\eps := \frac{d_{\wv}}{2}, \delta := \frac 12 \min \wv^i$. Note $\eps \leq \delta$ and so, by \eqref{eqn: J subset of K}, $\mathcal{J}^\eps \subset \mathcal{K}^\eps \subset \mathcal{K}^\delta$ (Note that it is NOT true in general that $\mathcal{J}^\eta \subset \mathcal{K}^\eta$; our choice of $\eps$ is the largest choice for which this is guaranteed to work). Furthermore $L$ is the $C^{1,\alpha}$ norm of $G$ on $\mathcal{K}^{\delta/2}$ and $-\gammaN$ is the bound of the second largest eigenvalue of $DG$ on $\mathcal{J}^{\eps}$. By Proposition \ref{prop: well conditioned} we have
 	 \begin{align*}
	 \gammaN 
	 \geq \frac{2^{3-1/q}}{C_\nabla M^{1/q}N^4\kappa}(\frac{d_{\wv}}{2})^{1/q} 
	 = \frac{2^{3-2/q}}{C_\nabla M^{1/q}N^4\kappa}d_{\wv}^{1/q}.
	 \end{align*}
	  Also by carefully tracing through the proofs in \cite{KitagawaMerigotThibert19} we have $L \leq C \delta^{-2}$. 
	
	We analyze a single iteration of the Algorithm. Define $\psi := \psi_k \in \mathcal{J}^\eps \subset \mathcal{K}^\eps \subset \mathcal{K}^\delta$.
	
	Let $v:= [D^2 \Phi(\psi)]^+ (\nabla \Phi(\psi))$. We see that
 	\begin{align*}
	\norm{v} \leq \frac{\norm{\nabla \Phi(\psi)}} {\gammaN},
	\end{align*}
	as $\Phi$ is $\gammaN$-concave in the direction orthogonal to $\onevect$.
	
	Also define $\psi_\tau = \psi - \tau v$. Let $\tau_1$ be the first exit time from $\mathcal{K}^{\delta/2}$. We have that
 	\begin{align*}
	\frac{\delta}{2} \leq \norm{G(\psi_{\tau_1}) - G(\psi)} \leq L \tau_1 \norm{v} \leq \frac{L}{\gammaN}\norm{\nabla \Phi(\psi)}\tau_1
	\end{align*}
	and so
 	\begin{align*}
	\tau_1 \geq \frac{\gammaN\delta}{2L\norm{{\nabla \Phi}(\psi)}}.
	\end{align*}
	Applying Taylor's formula to ${\nabla \Phi}$ we get
	
	\begin{equation}\label{eqn: ti Phi Taylor}
	{\nabla \Phi}(\psi_\tau) 
	= {\nabla \Phi}(\psi-\tau v) 
	= {\nabla \Phi}(\psi) - \tau (D{\nabla \Phi}(\psi))v + R(\tau)
	= {\nabla \Phi}(\psi) - \tau {\nabla \Phi}(\psi) + R(\tau)
	\end{equation}
	
	where
 	\begin{align*}
	\norm{R(\tau)}
	&= \norm{ \int_0^\tau (D{\nabla \Phi}(\psi_\sigma) - D{\nabla \Phi}(\psi))v  d\sigma } \\
	&= \norm{ \int_0^\tau DG(\psi_\sigma)v - DG(\psi)v d\sigma } \\
	&\leq \int_0^\tau L \norm{\psi_\sigma - \psi}^\alpha \norm{v} d\sigma\\
	&= \int_0^\tau L \norm{\sigma v}^\alpha \norm{v} d\sigma\\
	&= L \norm{v}^{\alpha+1} \int_0^\tau {\sigma}^\alpha d\sigma\\
	&= L \norm{v}^{\alpha+1} \frac{\tau^{\alpha + 1}}{\alpha+1}\\
	&\leq \frac{L\norm{{\nabla \Phi}(\psi)}^{1 + \alpha}}{\gammaN^{1+\alpha}}\tau^{1+\alpha} \numberthis \label{eqn: remainder estimate}
	\end{align*}
	for $\tau \leq 1$. 
	
	Finally we establish the error reduction estimates. \eqref{eqn: ti Phi Taylor} gives
 	\begin{align*}
	{\nabla \Phi}(\psi_\tau) 
	= (1-\tau){\nabla \Phi}(\psi) + R(\tau)
	\end{align*}
	so we have
 	\begin{align*}
	\norm{{\nabla \Phi}(\psi_\tau)} \leq (1-\frac{\tau}{2})\norm{{\nabla \Phi}(\psi)}
	\end{align*}
	provided that
 	\begin{align*}
	\norm{R(\tau)} \leq \frac{\tau}{2} \norm{{\nabla \Phi}(\psi)}.
	\end{align*}
	Again using \eqref{eqn: remainder estimate} this will be true provided
 	\begin{align*}
	\tau \leq \min(\tau_1, \frac{\gammaN^{1+\frac{1}{\alpha}}}{L^{\frac{1}{\alpha}} \norm{{\nabla \Phi}(\psi)}2^{\frac{1}{\alpha}}}) =: \tau_2.
	\end{align*}
	Hence we see that if we set $\conj \tau_k := \tau_2$, then the claim is true. Furthermore as the error goes to zero, eventually we must have $\tau_k = 1$. When this happens \eqref{eqn: ti Phi Taylor} gives
 	\begin{align*}
	{\nabla \Phi}(\psi_1) = R(1) 
	\end{align*}
	and so we get the super-linear convergence. 
	
\end{proof}

\section{Convergence of Gradient Descent Algorithm} \label{sec: convergence gradient}

In order to remedy the fact that the above algorithm only has local convergence we propose a regularized version in order to get within a close enough error.

We define the regularized Kantorovich's functional
 \begin{align*}
\ti \Phi(\psi) = \Phi(\psi) - \frac{\gamma}{2}\norm{\psi}^2 
\end{align*}
where $\gamma > 0$ is some parameter. 

We see that
 \begin{align*}
\nabla \ti \Phi(\psi) 
= \nabla \Phi(\psi) - \gamma \psi
= G(\psi) - \lambda - \gamma \psi
\end{align*}
and
 \begin{align*}
D^2 \ti \Phi(\psi) 
= D^2 \Phi(\psi) - \gamma I
= DG(\psi) - \gamma I. 
\end{align*}
In particular we note that $\ti \Phi(\psi)$ is strongly $\gamma$-concave. If we let $C_L$ be the Lipschitz constant of $G$ ($C_L = CN$ for some universal constant $C$) then $\ti \Phi$ has Lipschitz gradient with constant $C_L + \gamma$. Hence $\ti \Phi$ is well-conditioned. 

\begin{lem}
	
The projection of $\mathcal{K}^0$ in the direction $\onevect$ is bounded by
 \begin{align*}
M_0 := \sqrt{N} \( \max_{x\in X} (\max_{y \in Y} c(x,y) - \min_{y \in Y} c(x,y)) \).
\end{align*}
In other words if $\psi \in \mathcal{K}^0$ and $\sum_i \psi^i = 0$ then $\norm{\psi} \leq M_0$. 
	
\end{lem}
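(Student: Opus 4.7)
The plan is to use the definition of $\mathcal{K}^0$ to extract pointwise inequalities on the differences $\psi^i - \psi^j$, and then combine these with the normalization $\sum_i \psi^i = 0$ to get a componentwise $\ell^\infty$ bound.

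First, let $D := \max_{x \in X}(\max_{y \in Y} c(x,y) - \min_{y \in Y} c(x,y))$, so that $M_0 = \sqrt{N}\, D$. Fix $\psi \in \mathcal{K}^0$. By definition $G^i(\psi) > 0$ for every $i$, so $\mu(\Lag_i(\psi)) > 0$ and in particular $\Lag_i(\psi) \neq \emptyset$. Choose any $x_i \in \Lag_i(\psi)$. By the defining inequality of the Laguerre cell, for every $j \in \{1,\ldots,N\}$,
\begin{align*}
c(x_i, y_i) + \psi^i \leq c(x_i, y_j) + \psi^j,
\end{align*}
which rearranges to $\psi^i - \psi^j \leq c(x_i, y_j) - c(x_i, y_i) \leq D$.

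Since this holds for every pair $(i,j)$, it follows that $\max_i \psi^i - \min_j \psi^j \leq D$. Now impose $\sum_i \psi^i = 0$. If we had $\max_i \psi^i > D$, then $\min_j \psi^j > 0$ and hence $\sum_i \psi^i > 0$, a contradiction; symmetrically $\min_j \psi^j \geq -D$. Therefore $|\psi^i| \leq D$ for every $i$, and
\begin{align*}
\norm{\psi} = \sqrt{\sum_{i=1}^N |\psi^i|^2} \leq \sqrt{N}\, D = M_0.
\end{align*}

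There is no real obstacle here; the only point that requires a moment's thought is that $G^i(\psi) > 0$ is strong enough to guarantee $\Lag_i(\psi) \neq \emptyset$ (positive $\mu$-measure suffices), after which the argument is just the standard observation that the Laguerre cells being nonempty controls the spread of $\psi$, and the zero-sum normalization then centers this spread to give the $\ell^\infty$ bound.
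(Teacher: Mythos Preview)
Your proof is correct and follows essentially the same idea as the paper's: nonemptiness of each Laguerre cell forces $\psi^i - \psi^j \leq D$, and the zero-sum normalization then gives $\norm{\psi}_\infty \leq D$. The only difference is that the paper argues by contradiction (if $\norm{\psi}_\infty > D$ then some cell is empty), whereas you give the direct version; the content is the same.
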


\begin{proof}

Since $\sum_i \psi^i = 0$ there are some indices $j_1, j_2 \in \{ 1, \dots, N \}$ so that $\psi^{j_1} \leq 0$ and $\psi^{j_2} \geq 0$. Suppose for sake of contradiction that $\norm{\psi} > M_0$. Then we must have $\norm{\psi}_\infty > \frac{M_0}{\sqrt{N}} = \max_{x\in X} (\max_{y \in Y} c(x,y) - \min_{y \in Y} c(x,y))$. In particular there is some index $j$ so that $\abs{\psi^j} > \max_{x\in X} (\max_{y \in Y} c(x,y) - \min_{y \in Y} c(x,y))$. 

We first look at the case where ${\psi^j} > \max_{x\in X} (\max_{y \in Y} c(x,y) - \min_{y \in Y} c(x,y))$. In this case
 \begin{align*}
\Lag_j(\psi) 
&= \{x\in X\mid c(x, y_j)+\psi^j= \min_{i} c(x,y_i) + \psi^i \} \\
&\subset \{x\in X\mid c(x, y_j)+\psi^j \leq c(x,y_{j_1}) + \psi^{j_1} \} \\
&\subset \{x\in X\mid \psi^j \leq c(x,y_{j_1}) -c(x, y_j) \}
= \emptyset
\end{align*}
and so we get $G(\psi)^j = 0$ which contradicts $\psi \in \mathcal{K}^0$. A similar argument handles the case where ${\psi^j} < - \max_{x\in X} (\max_{y \in Y} c(x,y) - \min_{y \in Y} c(x,y))$.

\end{proof}

We remark that since $X$ is compact and $c$ is continuous, $M_0 < +\infty$ and so the bound isn't vacuous. 

Since $\ti \Phi$ is strongly concave it has a unique maximizer. Although it may not be true that this maximizer is in $\mathcal{K}^0$ we show that it is still bounded. 

\begin{lem}\label{lem: unique max bound}
Let $\psi^*$ be the unique maximizer of $\ti \Phi$. Then $\norm{\psi^*} \leq M_0$. 
\end{lem}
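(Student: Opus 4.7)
The plan is to first exploit the first-order optimality condition to pin down $\sum_i (\psi^*)^i$, and then adapt the contradiction argument from the preceding lemma, using the first-order condition in place of the hypothesis $\psi \in \mathcal{K}^0$.

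For the first step, since $\ti \Phi$ is strongly concave and smooth, the maximizer satisfies
\begin{align*}
0 = \nabla \ti \Phi(\psi^*) = G(\psi^*) - \wv - \gamma \psi^*,
\end{align*}
so $G(\psi^*) = \wv + \gamma \psi^*$. Summing the $N$ components and using that the Laguerre cells $\mu$-a.e. partition $X$ (hence $\sum_i G^i(\psi^*) = \mu(X) = 1 = \sum_i \wv^i$), I obtain $\gamma \sum_i (\psi^*)^i = 0$, and thus $\sum_i (\psi^*)^i = 0$ since $\gamma > 0$.

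For the second step, I argue by contradiction: suppose $\norm{\psi^*} > M_0$, so that $\norm{\psi^*}_\infty > M_0/\sqrt{N} = \max_{x \in X}(\max_y c(x,y) - \min_y c(x,y))$. Pick $j$ with $\abs{(\psi^*)^j}$ maximal. Since $\sum_i (\psi^*)^i = 0$, there is some $j_1$ with $(\psi^*)^{j_1} \leq 0$ and some $j_2$ with $(\psi^*)^{j_2} \geq 0$. In the case $(\psi^*)^j > M_0/\sqrt{N}$, the computation from the previous lemma comparing $\Lag_j$ with $\Lag_{j_1}$ gives $\Lag_j(\psi^*) = \emptyset$, hence $G^j(\psi^*) = 0$; then the first-order condition yields $\wv^j = -\gamma(\psi^*)^j < 0$, contradicting $\wv^j > 0$. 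The symmetric case $(\psi^*)^j < -M_0/\sqrt{N}$ is handled by comparing $\Lag_{j_2}$ with $\Lag_j$ to conclude $\Lag_{j_2}(\psi^*) = \emptyset$, and then $\wv^{j_2} = -\gamma (\psi^*)^{j_2} \leq 0$ forces $\wv^{j_2} = 0$, again contradicting positivity.

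The only real subtlety is ensuring $\wv^i > 0$ for every $i$. This follows from the standing hypothesis $d_\wv > 0$: since $\{\wv^i\}$ is a nonempty proper subset of $\{\wv^1, \dots, \wv^N\}$ (we have $N \geq 2$), $\wv^i$ itself lies in $\subprop{\wv}$, and $0 \in S_\dv$ as the empty sum, so $\wv^i = \dist(\wv^i, 0) \geq d_\wv > 0$. No routine estimates are expected to present difficulty; the argument is essentially a reuse of the prior lemma once the centering $\sum_i (\psi^*)^i = 0$ is in hand.
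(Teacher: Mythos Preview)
Your proof is correct but takes a genuinely different route from the paper's. The paper's argument is a two-line comparison: it uses the unregularized maximizer $\psi_{\max}$ (with $\sum_j \psi_{\max}^j = 0$) from Remark~\ref{rmk: unique max constr}, which lies in $\mathcal{K}^0$ and hence satisfies $\norm{\psi_{\max}} \leq M_0$ by the preceding lemma; then from $\ti\Phi(\psi^*) \geq \ti\Phi(\psi_{\max})$ and $\Phi(\psi_{\max}) \geq \Phi(\psi^*)$ one subtracts to get $\norm{\psi^*} \leq \norm{\psi_{\max}} \leq M_0$. Your approach instead stays entirely at $\psi^*$: you extract $\sum_i (\psi^*)^i = 0$ from the first-order condition and then rerun the contradiction in the previous lemma, replacing the hypothesis ``$\psi \in \mathcal{K}^0$'' by the first-order identity $G^j(\psi^*) = \wv^j + \gamma(\psi^*)^j$ together with $\wv^j \geq d_\wv > 0$. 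The paper's route is shorter and exploits the existence of the unregularized optimizer; yours is self-contained (no need to invoke $\psi_{\max}$) and makes explicit exactly where the standing hypothesis $d_\wv > 0$ enters, which the paper's argument also needs implicitly (to place $\psi_{\max}$ in $\mathcal{K}^0$).
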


\begin{proof}
	
Let $\psi_{max}$ be the maximizer of $\Phi$ constructed in Remark \ref{rmk: unique max constr} so that $\sum \psi_{max}^j = 0$. Then $G(\psi_{max}) = \wv$ and so $\psi_{max} \in \mathcal{K}^0$. In particular $\norm{\psi_{max}} \leq M_0$. But since $\psi^*$ is the maximizer of $\ti \Phi$ we have $\ti \Phi(\psi_{max}) \leq \ti \Phi(\psi^*)$. Hence:
 \begin{align*}
\Phi(\psi_{max}) - \gamma \norm{\psi_{max}}^2 &\leq \Phi(\psi^*) - \gamma \norm{\psi^*}^2 \\
- \gamma \norm{\psi_{max}}^2 &\leq - \gamma \norm{\psi^*}^2 \\
\norm{\psi_{max}}^2 &\geq \norm{\psi^*}^2
\end{align*}
and so the claim follows. 

\end{proof}

We now recall the standard gradient descent algorithm. 

\begin{algorithm}[H]
	
		\DontPrintSemicolon
		
		\SetKwInput{KwParameters}{Parameters}

		\KwParameters{A regularization constant $\gamma > 0$ and a step size $h > 0$.}
		
		\KwIn {A tolerance $\zeta > 0$ and an initial $\psi_0\in
		\R^N$ such that $\sum \psi_0^i = 0$.}
		
		\While{ $\norm{\gtp(\psi_k)}  \geq \zeta$}
		{
		\begin{description}
			\item[Step 1] Compute $\vec{d}_k = - \gtp(\psi_k)$
			\item [Step 2] Set $\psi_{k+1} = \psi_k + h\vec{d}_k$ and $k\gets k+1$.
		\end{description}
		}

	\label{alg: regularized gradient}
	\caption{Regularized Gradient Descent}
\end{algorithm}

It is well known that Algorithm \ref{alg: regularized gradient} converges with linear rate as $\ti \Phi$ is well conditioned. We recall the convergence rate.

\begin{prop}\label{prof: unique dual max}

Let $\psi^*$ be the unique maximizer of $\ti \Phi$. Suppose that $\gamma < \frac{\min \wv^i}{M_0}$. If we set $h = \frac{2}{C_L + 2\gamma}$ then the iterates of Algorithm \ref{alg: regularized gradient} satisfy 
 \begin{align*}
\norm{\psi_k - \psi^*} \leq \(\frac{Q_f - 1}{Q_f + 1}\)^k (M_0 + \norm{\psi_0})
\end{align*}
and
 \begin{align*}
\norm{\nabla \Phi(\psi_{k})} \leq  (C_L+\gamma) \(\frac{Q_f - 1}{Q_f + 1}\)^k (M_0 + \norm{\psi_0})
\end{align*}
where $Q_f = \frac{C_L + \gamma}{\gamma}$ is the condition number and $C_L$ is the Lipschitz constant of $G$. 

\end{prop}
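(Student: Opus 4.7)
The plan is to apply the standard linear-convergence analysis for gradient ascent on a strongly concave, smooth function. The regularized functional $\ti \Phi$ is $\gamma$-strongly concave because $D^2\ti \Phi = DG - \gamma I \leq -\gamma I$, and its gradient is $L$-Lipschitz with $L := C_L + \gamma$. The chosen step $h = 2/(C_L + 2\gamma) = 2/(L+\gamma)$ is precisely the optimal choice for this pair of constants, so one should expect contraction factor $(L-\gamma)/(L+\gamma) = (Q_f - 1)/(Q_f + 1)$ with $Q_f = L/\gamma$.

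First I would establish the one-step contraction
\begin{align*}
\norm{\psi_{k+1} - \psi^*} \leq \tfrac{Q_f-1}{Q_f+1}\,\norm{\psi_k - \psi^*}.
\end{align*}
Expand $\norm{\psi_k - \psi^* + h\,\gtp(\psi_k)}^2$ and apply the textbook co-coercivity inequality for $L$-smooth, $\gamma$-strongly concave functions,
\begin{align*}
\langle \gtp(\psi_k),\, \psi^* - \psi_k \rangle \geq \tfrac{\gamma L}{\gamma+L}\norm{\psi_k - \psi^*}^2 + \tfrac{1}{\gamma + L}\norm{\gtp(\psi_k)}^2.
\end{align*}
With $h = 2/(L+\gamma)$ the $\norm{\gtp(\psi_k)}^2$ contributions cancel exactly, leaving $\norm{\psi_{k+1} - \psi^*}^2 \leq \bigl(\tfrac{L-\gamma}{L+\gamma}\bigr)^2 \norm{\psi_k - \psi^*}^2$. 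Iterating $k$ times and bounding $\norm{\psi_0 - \psi^*} \leq \norm{\psi_0} + \norm{\psi^*} \leq \norm{\psi_0} + M_0$ via Lemma \ref{lem: unique max bound} and the triangle inequality then yields the first claim.

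For the second estimate, since $\psi^*$ is the unconstrained maximizer of $\ti \Phi$ we have $\gtp(\psi^*) = 0$, and Lipschitz continuity of $\nabla \ti \Phi$ gives
\begin{align*}
\norm{\gtp(\psi_k)} = \norm{\gtp(\psi_k) - \gtp(\psi^*)} \leq (C_L+\gamma)\,\norm{\psi_k - \psi^*},
\end{align*}
so substituting the first bound produces the advertised estimate (read as a bound on $\gtp$; the transition to $\nabla \Phi$ costs at most an additive $\gamma\norm{\psi_k}$, which is controlled since $\norm{\psi_k}\leq \norm{\psi^*}+\norm{\psi_k-\psi^*}\leq M_0+\norm{\psi_0}$). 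The hypothesis $\gamma < (\min \wv^i)/M_0$ is not strictly needed for the convergence calculus itself; it presumably enters later via $\nabla \Phi(\psi^*) = \gamma \psi^*$ to ensure $\psi^* \in \mathcal{K}^0$, so that Algorithm \ref{alg: damped newton} can then take over in the second stage.

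There is no serious obstacle---the argument is a textbook gradient-descent convergence theorem transcribed to the concave maximization setting. The main care required is the algebra verifying that the specific step $h = 2/(L+\gamma)$ exactly annihilates the squared-gradient remainder, so the per-iteration contraction has precisely the condition-number-based factor $(Q_f-1)/(Q_f+1)$ rather than something slightly worse.
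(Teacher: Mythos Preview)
Your approach is correct and matches the paper's: the paper simply cites Nesterov's Theorem~2.1.15 for the contraction estimate (which is exactly the co-coercivity calculation you spell out), then bounds $\norm{\psi_0-\psi^*}$ via the triangle inequality and Lemma~\ref{lem: unique max bound}, and finally derives the gradient bound from Lipschitz continuity of $\gtp$ together with $\gtp(\psi^*)=0$. Your side observations are also accurate: the paper's proof in fact establishes the second bound for $\gtp(\psi_k)$ rather than $\nabla\Phi(\psi_k)$, and the hypothesis $\gamma < (\min_i \wv^i)/M_0$ is not invoked anywhere in the argument.
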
 

\begin{proof}

From the above lemma $\norm{\psi^*} \leq M_0$ hence $\norm{\psi_0 - \psi^*} \leq \norm{\psi_0} + \norm{\psi^*} \leq M_0 + \norm{\psi_0}$. The first claim now follows directly from \cite[Theorem 2.1.15]{Nesterov2018}. For the second claim since $\gtp$ is Lipschitz we have
 \begin{align*}
\norm{\gtp(\psi_{k}) - \gtp(\psi^*)} \leq (C_L+\gamma) \norm{\psi_k - \psi^*} \leq (C_L+\gamma) \(\frac{Q_f - 1}{Q_f + 1}\)^k (M_0 + \norm{\psi_0})
\end{align*}
and the result follows from recalling that $\gtp(\psi^*) = 0$ as $\psi^*$ is a maximizer. 

\end{proof}

Using the convergence rate we give a quick estimate of how many iterations it takes our algorithm to converge. 

\begin{cor}\label{cor: grad time}

With tolerance $\zeta$, parameter $\gamma$, starting input $\psi_0$, and step size $h = \frac{2}{C_L + 2\gamma}$, Algorithm \ref{alg: regularized gradient} terminates in at most $\log_{b} \frac{\zeta}{(M_0 + \norm{\psi_0}) (C_L+\gamma)}$ iterations where $b = \frac{C_L}{C_L + 2\gamma}$. 

\end{cor}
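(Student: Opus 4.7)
The plan is to combine the gradient bound from Proposition \ref{prof: unique dual max} with the termination criterion of Algorithm \ref{alg: regularized gradient} and solve for $k$. By the proposition, after $k$ iterations we have
\begin{align*}
\norm{\gtp(\psi_k)} \leq (C_L + \gamma)\left(\frac{Q_f - 1}{Q_f + 1}\right)^k (M_0 + \norm{\psi_0}).
\end{align*}
The while loop exits as soon as this quantity drops below $\zeta$, so it suffices to identify the smallest $k$ for which the right-hand side is less than $\zeta$.

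Next, I would simplify the contraction factor. With $Q_f = \frac{C_L + \gamma}{\gamma}$, a direct computation gives
\begin{align*}
\frac{Q_f - 1}{Q_f + 1} = \frac{(C_L + \gamma) - \gamma}{(C_L + \gamma) + \gamma} = \frac{C_L}{C_L + 2\gamma} = b,
\end{align*}
which matches the base $b$ appearing in the statement. Since $\gamma > 0$ and $C_L > 0$, we have $0 < b < 1$, so $\log_b$ is strictly decreasing and the logarithm is well defined.

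Rearranging the termination condition $(C_L + \gamma) b^k (M_0 + \norm{\psi_0}) < \zeta$ and taking $\log_b$ of both sides (which flips the inequality because $b < 1$) yields
\begin{align*}
k > \log_b \frac{\zeta}{(M_0 + \norm{\psi_0})(C_L + \gamma)},
\end{align*}
which is precisely the iteration bound claimed. I do not expect any serious obstacle here; the only minor care needed is tracking the inequality flip when taking the logarithm in a base smaller than one, and observing that the gradient bound from Proposition \ref{prof: unique dual max} applies to $\gtp$ (despite the slight notational inconsistency in the proposition statement) since its proof uses the Lipschitz constant $C_L + \gamma$ of $\gtp$ together with $\gtp(\psi^*) = 0$.
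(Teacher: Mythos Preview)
Your proposal is correct and follows essentially the same approach as the paper: the paper's proof is the one-liner ``This follows from the second inequality in the above prop after noting that $b = \frac{Q_f - 1}{Q_f + 1}$,'' and your argument simply spells out this computation and the logarithm step in detail. You also correctly flag that the bound in Proposition~\ref{prof: unique dual max} is really on $\gtp(\psi_k)$ rather than $\nabla\Phi(\psi_k)$, which is what the termination criterion of Algorithm~\ref{alg: regularized gradient} actually tests.
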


\begin{proof}

This follows from the second inequality in the above prop after noting that $b = \frac{Q_f - 1}{Q_f + 1}$. 

\end{proof}

We conclude this section with some quantitative stability bounds that estimate how much error our regularization introduced. 

\begin{prop}\label{prop: stab reg}
	
	Let $\ti \psi$ be the result of running Algorithm \ref{alg: regularized gradient} with tolerance $\zeta$, parameter $\gamma$, and starting input $\psi_0$. Then 
 	\begin{align*}
	\norm{G(\ti \psi) - \wv} < \zeta + \gamma (2M_0 + \norm{\psi_0})
	\end{align*} 
	where $M_0$ is as defined above.
	
\end{prop}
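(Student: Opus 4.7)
The plan is to exploit the termination condition of Algorithm \ref{alg: regularized gradient} together with the formula for $\gtp$ and the uniform bound on iterates from the previous results.

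First I would rewrite $G(\ti\psi) - \wv$ in terms of $\gtp(\ti \psi)$. Since $\gtp(\psi) = \nabla \Phi(\psi) - \gamma \psi = G(\psi) - \wv - \gamma \psi$, we have
\begin{align*}
G(\ti \psi) - \wv = \gtp(\ti \psi) + \gamma \ti \psi,
\end{align*}
so by the triangle inequality
\begin{align*}
\norm{G(\ti \psi) - \wv} \leq \norm{\gtp(\ti \psi)} + \gamma \norm{\ti \psi}.
\end{align*}
The algorithm's termination condition immediately yields $\norm{\gtp(\ti \psi)} < \zeta$, so it remains to bound $\norm{\ti \psi}$.

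To control $\norm{\ti \psi}$ I would use the iterate bound from Proposition \ref{prof: unique dual max} together with Lemma \ref{lem: unique max bound}. The former gives $\norm{\psi_k - \psi^*} \leq \bigl(\tfrac{Q_f - 1}{Q_f + 1}\bigr)^k (M_0 + \norm{\psi_0}) \leq M_0 + \norm{\psi_0}$ for every iterate (using that the contraction factor is at most $1$), and the latter gives $\norm{\psi^*} \leq M_0$. Combining via the triangle inequality:
\begin{align*}
\norm{\ti \psi} \leq \norm{\ti \psi - \psi^*} + \norm{\psi^*} \leq (M_0 + \norm{\psi_0}) + M_0 = 2M_0 + \norm{\psi_0}.
\end{align*}
Substituting this bound back into the earlier inequality yields the claim $\norm{G(\ti \psi) - \wv} < \zeta + \gamma(2M_0 + \norm{\psi_0})$.

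This is essentially bookkeeping rather than analysis: the only mild subtlety is recognizing that the iterate bound from Proposition \ref{prof: unique dual max} holds for every $k$ (so in particular for the final iterate $\ti \psi$) and that $\psi^*$ lies in a ball of radius $M_0$ about the origin so that the triangle inequality does not pick up a factor growing with $\gamma$. No new estimates or intricate arguments are needed beyond what has already been established.
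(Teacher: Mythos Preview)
Your proof is correct and follows essentially the same approach as the paper: both rewrite $G(\ti\psi)-\wv=\gtp(\ti\psi)+\gamma\ti\psi$, use the termination condition for the first term, and bound $\norm{\ti\psi}$ via the triangle inequality $\norm{\ti\psi}\leq\norm{\ti\psi-\psi^*}+\norm{\psi^*}$ together with Proposition~\ref{prof: unique dual max} and Lemma~\ref{lem: unique max bound}.
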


\begin{proof}
	
We have
 \begin{align*}
G(\ti \psi) - \wv 
= \gtp(\ti \psi) + \gamma \ti \psi
= \gtp(\ti \psi) + \gamma (\ti \psi - \psi^* + \psi^*),
\end{align*}
where $\psi^*$ is the unique maximizer of $\ti \Phi$. By Proposition \ref{prof: unique dual max} we have $\norm{\ti \psi - \psi^*} \leq M_0 + \norm{\psi_0}$ and by Lemma \ref{lem: unique max bound} we have $\norm{\psi^*} \leq M_0$. Hence we get
 \begin{align*}
\norm{G(\ti \psi) - \wv} 
\leq \norm{\gtp(\ti \psi)} + \gamma (\norm{\ti \psi - \psi^*} + \norm{\psi^*})
< \zeta + \gamma (2M_0 + \norm{\psi_0})
\end{align*}
as desired.

\end{proof}

\section{Two-Stage Algorithm} \label{sec: two stage}

We now propose a two-stage algorithm.

\begin{algorithm}[H]
	
		\DontPrintSemicolon

		\KwIn{ A tolerance $\ti \zeta > 0$.}
		
		\begin{description}
			\item[Stage 1] Run Algorithm \ref{alg: regularized gradient} with tolerance $\zeta = \frac{d_{\wv}}{4\sqrt{N}}$, parameter $\gamma = \frac{d_{\wv}}{8M_0\sqrt{N}}$, starting input $\psi_ 0 = \bm{0}$, and step size $h = \frac{2}{C_L + 2\gamma}$. Let $\ti \psi$ be the output. 
			\item [Stage 2] Run Algorithm \ref{alg: damped newton} with parameters $\zeta = \ti \zeta$ and $\psi_ 0 = \ti \psi$. 
		\end{description}

	\label{alg: two stage}
	\caption{Two Stage Algorithm}
\end{algorithm}

\begin{prop}

Stage 1 of Algorithm \ref{alg: two stage} terminates in at most $\frac{32 N M_0^2 C_L^2}{d_{\wv}^2}$ steps (note that this is a constant independent of $\ti \zeta$).

Let $E = \frac{\gammaN^{1+\frac{1}{\alpha}}\delta}{L^{\frac{1}{\alpha}} 2^{\frac{1}{\alpha}}}$. If $\ti \zeta \geq E$ then Stage 2 terminates in at most
 \begin{align*}
\max\( \frac{2(\frac{d_{\wv}}{2\sqrt{N}} - \ti \zeta)}{E}, 0 \)
\end{align*}
steps. 

Otherwise Stage 2 terminates in at most
 \begin{align*}
\max\( \frac{2(\frac{d_{\wv}}{4\sqrt{N}} - E)}{E}, 0 \) + \log_{1+\alpha} \frac{\log \frac{L^{\frac{1}{\alpha}}}{\gammaN^{1+\frac{1}{\alpha}}}\ti \zeta}{\log \frac{L^{\frac{1}{\alpha}}}{\gammaN^{1+\frac{1}{\alpha}}}E}
\end{align*} 
 steps.

\end{prop}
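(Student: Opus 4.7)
The plan is to handle the two stages in sequence, using Corollary \ref{cor: grad time} for Stage 1 and Proposition \ref{prop: Newt rate} for Stage 2. For Stage 1, Corollary \ref{cor: grad time} with $\psi_0 = \bm{0}$ bounds the iteration count by $\log_b \frac{\zeta}{M_0(C_L + \gamma)}$, where $b = C_L/(C_L + 2\gamma)$. To convert this into the polynomial bound $32 N M_0^2 C_L^2/d_\wv^2$ (which has no logarithm), I would apply two elementary inequalities: $\log y \leq y$ for $y > 0$ (applied to the numerator $\log(M_0(C_L+\gamma)/\zeta)$) and $\log(1+x) \geq x/(1+x)$ (applied to $\log(1/b) = \log(1 + 2\gamma/C_L) \geq 2\gamma/(C_L+2\gamma)$). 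This yields an upper bound of $M_0(C_L+\gamma)(C_L+2\gamma)/(2\gamma\zeta)$. Substituting $\zeta = d_\wv/(4\sqrt{N})$ and $\gamma = d_\wv/(8M_0\sqrt{N})$, together with $\gamma \leq C_L/2$ (which holds in the parameter regime of interest) so that $(C_L+\gamma)(C_L+2\gamma) \leq 2C_L^2$, gives the stated bound.

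For Stage 2, I would first verify the hypothesis of Theorem \ref{thm: convergence of algo}. Proposition \ref{prop: stab reg} with $\psi_0 = \bm{0}$ gives $\norm{\nabla \Phi(\ti\psi)} = \norm{G(\ti\psi) - \wv} < \zeta + 2M_0\gamma = \frac{d_\wv}{4\sqrt{N}} + \frac{d_\wv}{4\sqrt{N}} = \frac{d_\wv}{2\sqrt{N}}$, matching the hypothesis exactly. Proposition \ref{prop: Newt rate} then provides $\norm{\nabla \Phi(\psi_{k+1})} \leq (1 - \conj\tau_k/2)\norm{\nabla \Phi(\psi_k)}$ at each step with $\conj\tau_k = \min(E/\norm{\nabla \Phi(\psi_k)}, 1)$.

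In the case $\ti\zeta \geq E$, throughout the run $\norm{\nabla \Phi(\psi_k)} \geq \ti\zeta \geq E$, so $\conj\tau_k = E/\norm{\nabla \Phi(\psi_k)}$ and the recursion collapses to an additive decrease $\norm{\nabla \Phi(\psi_{k+1})} \leq \norm{\nabla \Phi(\psi_k)} - E/2$. Hence termination occurs after at most $2(\frac{d_\wv}{2\sqrt{N}} - \ti\zeta)/E$ steps (the $\max(\cdot, 0)$ covers the possibility that the starting error already lies below $\ti\zeta$). When $\ti\zeta < E$, I would split the run into two phases. Phase 1 continues the additive-decrease argument until $\norm{\nabla \Phi(\psi_k)} < E$, contributing the first summand; after that, $\conj\tau_k = 1$ and Proposition \ref{prop: Newt rate} gives the superlinear recursion $\norm{\nabla \Phi(\psi_{k+1})} \leq (L/\gammaN^{1+\alpha})\norm{\nabla \Phi(\psi_k)}^{1+\alpha}$. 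The substitution $s_k = (L^{1/\alpha}/\gammaN^{1+1/\alpha})\norm{\nabla \Phi(\psi_k)}$ linearizes this to $s_{k+1} \leq s_k^{1+\alpha}$, yielding $s_k \leq s_0^{(1+\alpha)^k}$. Since $s_0 \leq (L^{1/\alpha}/\gammaN^{1+1/\alpha}) E < 1$ (both logs negative), solving $s_k < (L^{1/\alpha}/\gammaN^{1+1/\alpha})\ti\zeta$ and taking two further logarithms produces the claimed $\log_{1+\alpha}$ expression.

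The main obstacle is careful bookkeeping of constants rather than any substantive mathematical difficulty: Stage 1 requires combining the two log inequalities in exactly the right order to eliminate the logarithm and produce constants that match the stated bound (including verifying $\gamma \leq C_L/2$); Stage 2 requires tracking the Phase 1 starting value of $\norm{\nabla \Phi(\ti\psi)}$ precisely enough to recover the stated additive term, and being careful that $\log s_0$ and $\log s_k$ are negative so that inequalities flip correctly when one divides and exponentiates.
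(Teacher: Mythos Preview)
Your plan matches the paper's proof essentially line for line: Corollary~\ref{cor: grad time} followed by the two log inequalities for Stage~1, Proposition~\ref{prop: stab reg} to verify the Stage~2 hypothesis, then Proposition~\ref{prop: Newt rate} split into an additive-decrease phase and a superlinear phase handled by the substitution $s_k=(L^{1/\alpha}/\gammaN^{1+1/\alpha})\norm{\nabla\Phi(\psi_k)}$.

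One small arithmetic correction in Stage~1: from $\gamma\leq C_L/2$ alone you only get $(C_L+\gamma)(C_L+2\gamma)\leq 3C_L^2$, not $2C_L^2$, which would give $48$ rather than $32$. The paper avoids this by substituting $\zeta,\gamma$ \emph{before} applying the log inequalities, writing the ratio as $\log(\tfrac12+x)/\log(1+1/x)$ with $x=4\sqrt{N}M_0C_L/d_\wv$, and then using $1+1/x\leq 2$ (from $M_0,C_L\geq 1$, $d_\wv\leq 1$) to get exactly $2x^2=32NM_0^2C_L^2/d_\wv^2$. Your generic bound can be repaired the same way by using the sharper $\gamma\leq 1/8$ (which follows from the same standing assumptions), but as written the constant does not come out.
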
	

\begin{proof}

For the first claim we have using Corollary \ref{cor: grad time} that the maximum number of iterations is
 \begin{align*}
\frac{\log \frac{\zeta}{M_0(C_L + \gamma)}}{\log \frac{C_L}{C_L + 2\gamma}}
= \frac{\log \frac{\frac{d_{\wv}}{4\sqrt{N}}}{M_0(C_L + \frac{d_{\wv}}{8M_0\sqrt{N}})}}{\log \frac{C_L}{C_L + 2\frac{d_{\wv}}{8M_0\sqrt{N}}}}
= \frac{\log \frac{M_0(C_L + \frac{d_{\wv}}{8M_0\sqrt{N}})}{\frac{d_{\wv}}{4\sqrt{N}}}}{\log \frac{C_L + 2\frac{d_{\wv}}{8M_0\sqrt{N}}}{C_L}}
= \frac{\log (\frac 12 + \frac{4\sqrt{N}M_0C_L}{d_{\wv}})}{\log (1 + \frac{d_{\wv}}{4\sqrt{N}M_0C_L})}.
\end{align*}
Now using the classical inequalities, $\log(1+x) \geq \frac{x}{1+x}$ and $\log(\frac 12 +x)\leq \log(1+x) \leq x$ we get
\begin{align*}
\frac{\log (\frac 12 + \frac{4\sqrt{N}M_0C_L}{d_{\wv}})}{\log (1 + \frac{d_{\wv}}{4\sqrt{N}M_0C_L})}
\leq \frac{\frac{4\sqrt{N}M_0C_L}{d_{\wv}}}{\(\frac{\frac{d_{\wv}}{4\sqrt{N}M_0C_L}}{1 + \frac{d_{\wv}}{4\sqrt{N}M_0C_L}}\)}
\leq \frac{\frac{4\sqrt{N}M_0C_L}{d_{\wv}}}{\frac{\frac{d_{\wv}}{4\sqrt{N}M_0C_L}}{2}}
= \frac{32 N M_0^2 C_L^2}{d_{\wv}^2},
\end{align*}
where we have used $M_0, C_L \geq 1$ and $d_{\wv} \leq 1$ for the last inequality.

Next by Proposition \ref{prop: stab reg} we see that
 \begin{align*}
\norm{\nabla \Phi(\ti \psi)} = \norm{G(\ti \psi) - \wv} < \frac{d_{\wv}}{4\sqrt{N}} + \frac{d_{\wv}}{8M_0\sqrt{N}} (2M_0 + \norm{\psi_0}) = \frac{d_{\wv}}{2\sqrt{N}} 
\end{align*} 
where $\ti \psi$ is the initial input for Stage 2 in Algorithm \ref{alg: two stage}. Hence the result of Proposition \ref{prop: Newt rate} applies. In particular as long as $\frac{E}{\norm{\nabla \Phi(\psi_k)}} \leq 1$ we have
 \begin{align*}
\norm{\nabla \Phi(\psi_{k+1})} 
\leq (1 - \frac{\conj \tau_k}{2}) \norm{\nabla \Phi(\psi_{k})} 
= (1 - \frac{E}{2\norm{\nabla \Phi(\psi_k)}}) \norm{\nabla \Phi(\psi_{k})}
= \norm{\nabla \Phi(\psi_k)} - \frac{E}{2}
\end{align*}
which shows the second claim. 

Finally once we have $\norm{\nabla \Phi(\psi_k)} < E$, we see that $\conj \tau_k = 1$ and so by Proposition \ref{prop: Newt rate} we get
 \begin{align*}
\norm{\nabla \Phi(\psi_{k+1})} \leq \frac{L\norm{{\nabla \Phi}(\psi_k)}^{1 + \alpha}}{\gammaN^{1+\alpha}}
\end{align*}
and so 
 \begin{align*}
\norm{\nabla \Phi(\psi_{k+j})} 
\leq \(\frac{L}{\gammaN^{1+\alpha}}\)^{\frac{(1+\alpha)^j - 1}{\alpha}} \norm{\nabla \Phi(\psi_{k})}^{(1+\alpha)^j}
< \(\frac{L}{\gammaN^{1+\alpha}}\)^{\frac{(1+\alpha)^j - 1}{\alpha}} E^{(1+\alpha)^j}.
\end{align*}
Hence if
 \begin{align*}
j 
\geq \log_{1+\alpha} \frac{\log \frac{L^{\frac{1}{\alpha}}}{\gammaN^{1+\frac{1}{\alpha}}}\ti \zeta}{\log \frac{L^{\frac{1}{\alpha}}}{\gammaN^{1+\frac{1}{\alpha}}}E}
\end{align*}
then we will have $\norm{\nabla \Phi(\psi_{k+j})} < \ti \zeta$ as desired. This completes the proof of the third and final claim. 

\end{proof}

\section{Convergence of Cells} \label{sec: convergence cells}

In this section we obtain quantitative convergence of the Laguerre cells associated with our problem. We recall $\psi_{max}$, which is the maximizer of $\Phi$ constructed in Remark \ref{rmk: unique max constr} so that $\sum \psi_{max}^j = 0$.

The result on the $\mu$-symmetric convergence of the cells follows immediately from \cite{BansilKitagawa19b}.

\begin{prop}
 \begin{align*}
\sum_{i=1}^N \Delta_\mu({\Lag_{i}(\psi)}, {\Lag_{i}(\psi_{max})}) \leq 4 N \norm{\nabla \Phi(\psi)}_1
\end{align*}
\end{prop}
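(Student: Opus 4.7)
The plan is a layer-cake argument that exploits a built-in monotonicity of mass transfer between the Laguerre cells of $\psi$ and $\psi_{max}$. The key observation is the following: if $x \in \Lag_i(\psi) \cap \Lag_j(\psi_{max})$ with $i \neq j$, then from the two defining inequalities $c(x, y_i) + \psi^i \leq c(x, y_j) + \psi^j$ and $c(x, y_j) + \psi_{max}^j \leq c(x, y_i) + \psi_{max}^i$, adding yields $\psi_{max}^i - \psi^i \geq \psi_{max}^j - \psi^j$. Thus, after relabeling indices so that $\Delta^i := \psi_{max}^i - \psi^i$ is non-increasing in $i$, the transfer masses $A_{ij} := \mu(\Lag_i(\psi) \cap \Lag_j(\psi_{max}))$ must vanish for $i > j$ (boundary contributions are $\mu$-null because $\mu \ll \mathcal{L}^n$, so by \eqref{Twist} the cells are pairwise $\mu$-almost disjoint).

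Next I define the cumulative sets $B_k := \bigcup_{i \leq k} \Lag_i(\psi)$ and $B'_k := \bigcup_{i \leq k} \Lag_i(\psi_{max})$. The previous step gives $\mu(B'_k \setminus B_k) = 0$, so modulo $\mu$-null sets $B'_k \subseteq B_k$, and
\begin{align*}
\mu(B_k) - \mu(B'_k) = \sum_{i \leq k}\bigl(G^i(\psi) - \wv^i\bigr) \leq \norm{\nabla \Phi(\psi)}_1,
\end{align*}
using $G(\psi_{max}) = \wv$ and $\nabla \Phi(\psi) = G(\psi) - \wv$. On the other hand, a direct count gives $\mu(B_k) - \mu(B'_k) = \sum_{i \leq k < j} A_{ij}$, and summing over $k = 1, \dots, N-1$ yields $\sum_{i < j}(j-i) A_{ij} \leq (N-1) \norm{\nabla \Phi(\psi)}_1$, which in particular bounds $\sum_{i < j} A_{ij}$ by the same quantity.

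To finish, I expand
\begin{align*}
\sum_{i=1}^N \mu\bigl(\Lag_i(\psi) \triangle \Lag_i(\psi_{max})\bigr) = \sum_{i=1}^N \sum_{j \neq i}(A_{ij} + A_{ji}) = 2\sum_{i \neq j} A_{ij} = 2 \sum_{i < j} A_{ij},
\end{align*}
where the last equality uses $A_{ij} = 0$ for $i > j$ in the relabeled indices. Combining with the previous estimate, the claimed bound of $4N\norm{\nabla \Phi(\psi)}_1$ follows with substantial slack. The delicate point throughout is the handling of boundaries of Laguerre cells to justify that $A_{ij} = 0$ for $i > j$ (so that $B'_k \subseteq B_k$ up to $\mu$-null sets), and this bookkeeping is exactly what is carried out explicitly in \cite{BansilKitagawa19b}; this is presumably why the authors cite that reference directly rather than reproducing the argument here.
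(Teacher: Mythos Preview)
Your argument is correct and in fact yields the sharper bound $2(N-1)\norm{\nabla\Phi(\psi)}_1$, so the stated $4N$ inequality follows with room to spare. The monotonicity observation $\Delta^i\geq\Delta^j$ whenever $A_{ij}>0$, the resulting upper-triangularity of the transfer matrix, and the layer-cake summation are all sound; the tie case $\Delta^i=\Delta^j$ is handled exactly as you indicate, since any point contributing to $A_{ij}$ with $i>j$ must then lie on $\Lag_i(\psi)\cap\Lag_j(\psi)$, which is $\mu$-null by \eqref{Twist} and absolute continuity.

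As for comparison with the paper: the paper's proof is a one-line citation to \cite[Corollary 5.7]{BansilKitagawa19b} (with Remark 5.8 there), so there is no argument in the present paper to compare against. Your write-up is a self-contained reconstruction of the kind of combinatorial mass-transfer estimate that reference presumably contains. What your approach buys is transparency and a slightly better constant; what the citation buys is brevity and avoidance of the null-set bookkeeping you flag at the end.
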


\begin{proof}

This is an immediate consequence of \cite[Corollary 5.7]{BansilKitagawa19b} after applying Remark 5.8 there. 

\end{proof}

The Hausdorff convergence result in \cite{BansilKitagawa19b} strongly depends on the global PW inequality. Hence we have to do more work in our case to recover the result. 

\begin{thm}
	
Let $\psi \in \R^N$ be so that $\inner{\psi}{\onevect} = 0$. 
Suppose that 
\begin{align*}
\norm{\nabla \Phi(\psi)} < \min (\frac{2^{3-2/q}d_{\wv}^{1/q} \delta}{\norm{\rho}_\infty C_\Delta C_\nabla M^{1/q}N^5\kappa}, \frac{d_{\wv}}{2\sqrt{N}})
\end{align*}
 where $\delta = \frac 12 \min \wv^i$. Then
 \begin{align*}
\norm{\psi_{max} - \psi} \leq \frac{C_\nabla M^{1/q}N^4\kappa}{2^{3-2/q}d_{\wv}^{1/q}} \norm{\nabla \Phi(\psi)}
\end{align*}
and
 \begin{align*}
d_\H({\Lag_{i}(\psi_{max})}, {\Lag_{i}(\psi)} )^n 
\leq  \frac{C_1 C_\nabla M^{1/q}N^5\kappa}{2^{3-2/q}d_{\wv}^{1/q}\left(\arccos(1-C_2\L({\Lag_{i}(\psi_{max})})^2)\right)^{n-1}} \norm {\nabla \Phi(\psi)} 
\end{align*}
where $C_1, C_2$ are the constants described in \cite[Theorem 6.6]{BansilKitagawa19b}. 
\end{thm}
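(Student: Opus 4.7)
My plan is two-part. For the first inequality I will bound $\|\psi_{max}-\psi\|$ via a gradient ascent flow argument, exploiting the local strong concavity established in Proposition \ref{prop: well conditioned}. For the second inequality I will compose this bound with the cells-vs-weights Lipschitz estimate \cite[Theorem 6.6]{BansilKitagawa19b}. Set $\gammaN := \frac{2^{3-2/q}d_\wv^{1/q}}{C_\nabla M^{1/q}N^4\kappa}$ throughout.

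First I observe that $\|\nabla\Phi(\psi)\|_1 \leq \sqrt{N}\|\nabla\Phi(\psi)\| < d_\wv/2$, so $\psi \in \mathcal{J}^{d_\wv/2}$. I then consider the gradient ascent flow $\dot\phi(s) = \nabla\Phi(\phi(s))$ with $\phi(0)=\psi$. Since $\onevect \cdot \nabla\Phi(\phi) = \sum_i(G^i(\phi) - \wv^i) = 1 - 1 = 0$, the flow preserves the constraint $\inner{\phi(s)}{\onevect}=0$. By concavity of $\Phi$,
\begin{align*}
\tfrac{d}{ds}\|\nabla\Phi(\phi(s))\|^2 = 2\nabla\Phi(\phi(s))^T D^2\Phi(\phi(s))\nabla\Phi(\phi(s)) \leq 0,
\end{align*}
so $\|\nabla\Phi(\phi(s))\|$ is non-increasing, and hence $\|\nabla\Phi(\phi(s))\|_1\leq\sqrt N\|\nabla\Phi(\psi)\|<d_\wv/2$ for all $s\geq 0$. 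In other words the flow remains in $\mathcal{J}^{d_\wv/2}$ for all time.

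Proposition \ref{prop: well conditioned} applied with $\eta=d_\wv/2$ now yields $D^2\Phi(\phi(s))\preceq -\gammaN P$ along the flow, with $P$ the orthogonal projection onto $\onevect^\perp$. Because $\nabla\Phi\perp\onevect$ (as observed above) we have $P\nabla\Phi=\nabla\Phi$, so the previous computation sharpens to $\tfrac{d}{ds}\|\nabla\Phi(\phi(s))\|^2 \leq -2\gammaN\|\nabla\Phi(\phi(s))\|^2$. Gronwall's inequality then gives $\|\nabla\Phi(\phi(s))\|\leq\|\nabla\Phi(\psi)\|e^{-\gammaN s}$. In particular $\phi(s)$ is Cauchy, so it converges to some $\phi_\infty$ with $\nabla\Phi(\phi_\infty)=0$ and $\inner{\phi_\infty}{\onevect}=0$; by Corollary \ref{cor: unique max} and Remark \ref{rmk: unique max constr}, $\phi_\infty=\psi_{max}$. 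Integrating the length of the flow,
\begin{align*}
\|\psi_{max}-\psi\| \leq \int_0^\infty \|\nabla\Phi(\phi(s))\|\,ds \leq \frac{\|\nabla\Phi(\psi)\|}{\gammaN},
\end{align*}
which is the first stated inequality.

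For the Hausdorff bound I will invoke \cite[Theorem 6.6]{BansilKitagawa19b}, which (in the notation of the present paper) controls $d_\H(\Lag_i(\psi),\Lag_i(\psi_{max}))^n$ by a constant multiple of $\|\psi-\psi_{max}\|$, with the constants $C_1, C_2$ appearing in the statement and an additional factor of $N$; substituting our first bound produces the stated estimate. The main obstacle I anticipate is in this last step: verifying that the smallness condition in \cite[Theorem 6.6]{BansilKitagawa19b} (a bound on $\|\psi-\psi_{max}\|$ proportional to $\delta/(\|\rho\|_\infty C_\Delta N)$, preventing the cells from collapsing in the intermediate configurations) holds. This is precisely where the first half of the hypothesis $\|\nabla\Phi(\psi)\|<\frac{2^{3-2/q}d_\wv^{1/q}\delta}{\|\rho\|_\infty C_\Delta C_\nabla M^{1/q}N^5\kappa}$ enters: feeding it into the first inequality $\|\psi_{max}-\psi\|\leq\|\nabla\Phi(\psi)\|/\gammaN$ translates it into exactly such a smallness condition. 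The gradient flow argument itself is standard and essentially automatic once the strong concavity and correct inclusions $\mathcal{J}^{d_\wv/2}\subset\mathcal{K}^{d_\wv/2}$ are in hand.
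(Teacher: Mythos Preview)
Your argument is correct, and for the first inequality it takes a genuinely different route from the paper. The paper argues directly along the straight segment $\phi_t=(1-t)\psi_{max}+t\psi$: it writes
\[
\inner{\nabla\Phi(\psi)}{\psi-\psi_{max}}=\int_0^1 \inner{D^2\Phi(\phi_t)(\psi-\psi_{max})}{\psi-\psi_{max}}\,dt,
\]
invokes the eigenvalue bound of Proposition~\ref{prop: well conditioned} along the segment, and finishes with Cauchy--Schwarz. This is shorter, but it tacitly uses that every $\phi_t$ lies in $\mathcal{J}^{d_\wv/2}$ so that the uniform strong-concavity bound applies; that convexity-type fact is not justified in the paper and is not obvious, since $\mathcal{J}^\eta$ is a sublevel set of $\|\nabla\Phi\|_1$ rather than a sublevel set of a convex function. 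Your gradient-flow argument sidesteps this issue entirely: because $\tfrac{d}{ds}\|\nabla\Phi(\phi(s))\|^2\le 0$ holds by mere concavity of $\Phi$, the trajectory provably stays in $\mathcal{J}^{d_\wv/2}$, after which the sharpened decay $\le -2\gammaN\|\nabla\Phi\|^2$ and Gr\"onwall give exactly the same constant $1/\gammaN$. The price you pay is a slightly longer argument and the (routine) check that the flow is globally defined, which follows from the Lipschitz bound on $G$ in $\mathcal{K}^{d_\wv/2}\supset\mathcal{J}^{d_\wv/2}$.

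For the Hausdorff estimate your plan coincides with the paper's: feed the first inequality back into the hypothesis to obtain $\|\psi-\psi_{max}\|_\infty<\delta/(\|\rho\|_\infty C_\Delta N)\le \min_i\L(\Lag_i(\psi_{max}))/(2C_\Delta N)$, which is exactly the smallness condition of \cite[Theorem~6.6]{BansilKitagawa19b}, and then substitute. Nothing further is needed there.
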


\begin{proof}
	
Setting $\eps := \frac{d_{\wv}}{2}$ we see that $\norm{\nabla \Phi(\psi)} < \frac{d_{\wv}}{2\sqrt{N}}$ implies that $\psi \in \mathcal{J}^\eps$, where $\mathcal{J}^\eps$ is as in Proposition \ref{prop: well conditioned}. We recall by Proposition \ref{prop: well conditioned} that $\Phi$ is strongly concave on $\mathcal{J}^\eps$ except in the direction $\onevect$ with parameter $\frac{2^{3-2/q}}{C_\nabla M^{1/q}N^4\kappa}d_{\wv}^{1/q}$. Since we have that $\psi$ and $\psi_{max}$ are normal to $\onevect$ we get
\begin{align*}
\inner{\nabla \Phi(\psi)}{\psi - \psi_{max}} 
&= \int_0^1 \inner{D^2\Phi[(1-t)\psi_{max} + t\psi](\psi-\psi_{max})}{\psi-\psi_{max}}dt \\
&\leq \frac{-2^{3-2/q}d_{\wv}^{1/q}}{C_\nabla M^{1/q}N^4\kappa} \norm{\psi - \psi_{max}}^2.
\end{align*}
From the Cauchy-Schwarz inequality we obtain $\norm {\nabla \Phi(\psi)} \geq \frac{2^{3-2/q}d_{\wv}^{1/q}}{C_\nabla M^{1/q}N^4\kappa} \norm{\psi - \psi_{max}}$. Hence $\norm{\psi - \psi_{max}} \leq \frac{C_\nabla M^{1/q}N^4\kappa}{2^{3-2/q}d_{\wv}^{1/q}}\norm {\nabla \Phi(\psi)}$. Now using the assumed bound on $\norm {\nabla \Phi(\psi)}$ we get
 \begin{align*}
\norm{\psi - \psi_{max}}_\infty 
\leq \norm{\psi - \psi_{max}} 
< \frac{C_\nabla M^{1/q}N^4\kappa}{2^{3-2/q}d_{\wv}^{1/q}}(\frac{2^{3-2/q}d_{\wv}^{1/q} \delta}{\norm{\rho}_\infty C_\Delta C_\nabla M^{1/q}N^5\kappa})
= \frac{\delta}{\norm{\rho}_\infty C_\Delta N}. 
\end{align*}
Since $\wv^i  \leq \norm{\rho}_\infty\L({\Lag_{i}(\psi_{max})})$ we get $\norm{\psi - \psi_{max}}_\infty < \frac{\min_i \L({\Lag_{i}(\psi_{max})})}{2C_\Delta N}$. Hence the conditions of \cite[Theorem 6.6]{BansilKitagawa19b} are satisfied and we get
 \begin{align*}
d_\H({\Lag_{i}(\psi_{max})}, {\Lag_{i}(\psi)} )^n 
&\leq \frac{C_1N\norm{\psi_{max} - \psi}_\infty}{\left(\arccos(1-C_2\L({\Lag_{i}(\psi_{max})})^2)\right)^{n-1}} \\
&\leq \frac{C_1 C_\nabla M^{1/q}N^5\kappa}{2^{3-2/q}d_{\wv}^{1/q}\left(\arccos(1-C_2\L({\Lag_{i}(\psi_{max})})^2)\right)^{n-1}} \norm {\nabla \Phi(\psi)}
\end{align*}
as desired. 

\end{proof}

\begin{rmk}
This shows that Algorithm \ref{alg: two stage} has local superlinear convergence even when the error is measured by the Hausdorff distances between the cells. Furthermore, the size of the zone of local convergence is explicitly determined from the initial data. 
\end{rmk}

\section{Estimation of $d_{\wv}$} \label{sec: estimation of d}

In this section we give some methods to estimate or compute $d_{\wv}$. Recall our notation, $\dv^j = \mu(X_j)$. 

First we prove the claim made in Remark \ref{rmk: small set}. 

\begin{prop}\label{prop: small set}
	
Let $X_1, \dots, X_M$ be any fixed decomposition of $X$. Let $B := \{ \wv \in \wvs : d_\wv = 0 \} = \{ \wv \in \wvs: \subprop{\wv} \cap S_\dv \neq \emptyset \}$. Then $\dim B = N-2$ where dimension is understood in the sense of Hausdorff dimension. 
	
\end{prop}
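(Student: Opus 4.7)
The plan is to establish matching upper and lower bounds on $\dim B$ using the classical fact that a finite union of subsets of affine subspaces of dimension $k$ has Hausdorff dimension at most $k$, while an $(N-2)$-dimensional face of the simplex $\wvs$ has Hausdorff dimension exactly $N-2$.

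For the upper bound, I would rewrite $B$ explicitly as a finite union of slices of $\wvs$. For each nonempty proper subset $A\subsetneq\{1,\dots,N\}$ and each $s\in S_\dv$, introduce the affine hyperplane
\begin{align*}
H_{A,s}=\left\{\wv\in\R^N:\sum_{i\in A}\wv^i=s\right\}.
\end{align*}
Then by definition $B=\bigcup_{A,s}(H_{A,s}\cap\wvs)$, a finite union since there are $2^N-2$ choices of $A$ and $|S_\dv|\leq 2^M$ choices of $s$. The simplex $\wvs$ lies in the affine hyperplane $\{\sum_i\wv^i=1\}$, whose normal direction is $\onevect$. Since $A$ is a proper nonempty subset, the normal $\mathbf{1}_A$ of $H_{A,s}$ is \emph{not} parallel to $\onevect$, so $H_{A,s}$ and the affine hull of $\wvs$ are distinct hyperplanes in $\R^N$. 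Hence their intersection is an affine subspace of dimension $N-2$ (or empty), and each $H_{A,s}\cap\wvs$ has Hausdorff dimension at most $N-2$. Taking the finite union preserves this bound, giving $\dim B\leq N-2$.

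For the lower bound, I only need to exhibit a single pair $(A,s)$ producing an $(N-2)$-dimensional slice. Take $A=\{1\}$ and $s=0$. Note $0\in S_\dv$ since $S_\dv$ is defined to include the empty sum (the choice $A=\emptyset$ in the definition of $S_V$). Thus $\{\wv\in\wvs:\wv^1=0\}\subset B$, and this is exactly the facet of $\wvs$ opposite the vertex $e_1$, which is isometric to the standard $(N-2)$-simplex and so has Hausdorff dimension $N-2$. Combining the two bounds yields $\dim B=N-2$.

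The argument is essentially a linear-algebraic bookkeeping exercise; there is no serious obstacle. The only subtlety worth spelling out is that the normal vector $\mathbf{1}_A$ of $H_{A,s}$ is a scalar multiple of $\onevect$ only when $A=\{1,\dots,N\}$, which is excluded by $A\subsetneq\{1,\dots,N\}$, so each slice is genuinely codimension one inside $\wvs$. Everything else is standard properties of Hausdorff dimension under finite unions and affine embeddings.
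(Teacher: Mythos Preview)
Your proposal is correct and follows essentially the same route as the paper: write $B$ as a finite union of slices $H_{A,s}\cap\wvs$ (the paper indexes these by $0$--$1$ vectors $v\in\F_2^{N,\mathrm{prop}}$ in place of your proper nonempty index sets $A$, which is the same thing), observe that each slice is a transversal hyperplane section of the $(N-1)$-simplex and hence has dimension at most $N-2$, and then exhibit the face $\{\wv\in\wvs:\wv^1=0\}\subset B$ for the matching lower bound.
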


\begin{proof}
	
Note that since $\dv$ is a vector with a finite number of entries, $S_\dv$ is a finite set. Say $S_\dv = \{s_1, \dots, s_l\}$. We denote the set of all length $N$ vectors in $\R^N$ whose entries are all $0$ or $1$ by $\F_2^{N}$. We let $\F_2^{N, prop} = \F_2^N \setminus \{(0, \dots, 0), (1, \dots, 1)\}$. 

Now for any vector $v \in \F_2^{N}$ and $i \in \{1, \dots, l\}$ define $A_{v,i} = \{\wv \in \R^N: \inner{v}{\wv} = s_i \}$. We see that
 \begin{align*}
B = \bigcup_{(v,i) \in \F_2^{N, prop} \times \{1, \dots, l \}} A_{v,i} \cap \wvs
\end{align*}
as for any $v \in \F_2^N$, 
 \begin{align*}
A_{v,i} \cap \wvs = \{\wv \in \wvs: \sum_{\{j: v^j = 1\}} \wv^j = s_i\}.
\end{align*}
Note that as long as $v \neq (0, \dots, 0)$, $A_{v,i}$ is a hyperplane normal to $v$. Since $\wvs$ is contained in a hyperplane normal to $(1, \dots, 1)$ we see that for every $v \in \F_2^{N, prop}$, $A_{v,i}$ is a hyperplane transversal to $\wvs$ (as $v \neq (1, \dots, 1)$) and hence $A_{v,i} \cap \wvs$ has dimension at most $N-2$. Since $B$ is a finite union of sets of this form we see that $B$ has dimension at most $N-2$. 

Note that since $B$ always contains $\{\wv \in \wvs: \wv^1 = 0\}$, $\dim B \geq N-2$. 
	
\end{proof}

Next we examine the case where $\wv^i$ and $\dv^j$ are all rational numbers and the denominators of $\wv^i$ are relatively prime to those of $\dv^j$. Note that the probability of two large integers being relatively prime is $\frac{\pi^2}{6} \approx 60\%$.

\begin{prop}

Suppose that there are relatively prime positive integers $\ti N, \ti M$ so that $\ti N \wv^i, \ti M \dv^j \in \N$. Then $d_{\wv} \geq \frac{1}{\ti M \ti N}$.

\end{prop}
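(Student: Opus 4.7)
The argument is a short Bezout-style calculation. The plan is to fix arbitrary $s_1 \in \subprop{\wv}$ and $s_2 \in S_\dv$, and show directly that $\abs{s_1 - s_2} \geq \frac{1}{\ti N \ti M}$; taking the infimum over all such pairs will then yield the desired lower bound on $d_\wv$.

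First I would clear denominators. Writing $s_1 = \sum_{i \in I} \wv^i$ for some nonempty proper $I \subsetneq \{1, \dots, N\}$ and $s_2 = \sum_{j \in J} \dv^j$ for some $J \subseteq \{1, \dots, M\}$, the hypothesis guarantees that $a := \ti N s_1 = \sum_{i\in I} \ti N \wv^i$ and $b := \ti M s_2 = \sum_{j \in J} \ti M \dv^j$ are nonnegative integers. Then
\[
s_1 - s_2 = \frac{a \ti M - b \ti N}{\ti N \ti M},
\]
where the numerator is an integer. If it is nonzero, then $\abs{a\ti M - b \ti N} \geq 1$ and we immediately obtain $\abs{s_1 - s_2} \geq \frac{1}{\ti N \ti M}$, as required.

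The main step is therefore to rule out $a \ti M = b \ti N$. For this I would invoke coprimality: since $\gcd(\ti N, \ti M) = 1$, Euclid's lemma applied to $\ti N \mid a \ti M$ yields $\ti N \mid a$, so $s_1 = a/\ti N$ is an integer. Combined with $s_1 \in [0,1]$ (which follows from $\wv \in \wvs$), this forces $s_1 \in \{0, 1\}$. The final piece is to exclude both endpoints: under the natural convention $\N = \{1, 2, \dots\}$ (so that $\wv^i \geq 1/\ti N > 0$ for each $i$), the nonemptiness of $I$ rules out $s_1 = 0$, and the properness of $I$ together with $\sum_{i=1}^N \wv^i = 1$ rules out $s_1 = 1$. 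This contradicts the supposed equality $a \ti M = b \ti N$, completing the proof. The only real subtlety is the convention on $\N$ (if $0 \in \N$ is permitted, one must first reduce to the subset of indices where $\wv^i > 0$, since otherwise $d_\wv = 0$ trivially and the conclusion fails); the underlying arithmetic is entirely routine.
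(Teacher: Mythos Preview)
Your argument is correct and essentially identical to the paper's: both clear denominators, use $\gcd(\ti N,\ti M)=1$ to force $\ti N\mid a$, and then use $s_1\in[0,1]$ together with the nonempty-proper constraint on $I$ to reach a contradiction. The only cosmetic difference is that the paper works with the minimizing pair $(I,J)$ directly while you bound every pair and pass to the infimum; your explicit remark on the $\N$ convention is a welcome clarification that the paper leaves implicit.
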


\begin{proof}

Let $I, J$ be the minimizing index sets so that
 \begin{align*}
d_{\wv} = \abs{\sum_{i \in I} \wv^i - \sum_{j \in J} \dv^j}
\end{align*}
where $I \neq \emptyset, \{1, \dots, N \}$. 

Then
 \begin{align*}
\ti M \ti N d_{\wv} = \abs{\ti M \sum_{i \in I} \ti N \wv^i - \ti N \sum_{j \in J} \ti M \dv^j}.
\end{align*}
We claim that $\ti M \sum_{i \in I} \ti N \wv^i \neq \ti N \sum_{j \in J} \ti M \dv^j$. If it did then we would have that $\ti N$ divides $\ti M \sum_{i \in I} \ti N \wv^i$. Since $\ti M, \ti N$ are relatively prime this gives us that $\ti N$ divides $\sum_{i \in I} \ti N \wv^i$. In particular either $\sum_{i \in I} \ti N \wv^i = 0$ or $\sum_{i \in I} \ti N \wv^i \geq \ti N$ which would imply either $\sum_{i \in I} \wv^i = 0$ or $\sum_{i \in I} \wv^i \geq 1$ respectively. The first case gives $I = \emptyset$ and the second gives $I = \{1, \dots, N \}$ both of which are contradictions. 

Now since we have $\ti M \sum_{i \in I} \ti N \wv^i \neq \ti N \sum_{j \in J} \ti M \dv^j$ we get $\ti M \sum_{i \in I} \ti N \wv^i - \ti N \sum_{j \in J} \ti M \dv^j \neq 0$. Since this is an integer we must then have $\abs{\ti M \sum_{i \in I} \ti N \wv^i - \ti N \sum_{j \in J} \ti M \dv^j} \geq 1$. Hence
 \begin{align*}
\ti M \ti N d_{\wv} = \abs{\ti M \sum_{i \in I} \ti N \wv^i - \ti N \sum_{j \in J} \ti M \dv^j} \geq 1
\end{align*}
and so $d_{\wv} \geq \frac{1}{\ti M \ti N}$ as desired.

\end{proof}

One particularly useful case of this is when we have equal weights.

\begin{cor}
	
If we have $\gcd(M,N) = 1$, $\lambda^i = \frac{1}{N}$, and $\dv^j = \frac{1}{M}$ then $d_{\wv} \geq \frac{1}{MN}$. 
\end{cor}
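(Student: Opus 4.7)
The plan is to observe that this corollary is an immediate specialization of the preceding proposition, so essentially no new work is needed beyond verifying the hypotheses. First I would identify the relatively prime positive integers needed to invoke the preceding proposition: take $\ti N := N$ and $\ti M := M$. The assumption $\gcd(M,N) = 1$ is exactly the requirement that $\ti N$ and $\ti M$ be relatively prime.

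Next I would check the integrality conditions. Since $\wv^i = \frac{1}{N}$, we have $\ti N \wv^i = N \cdot \frac{1}{N} = 1 \in \N$ for each $i$. Similarly, since $\dv^j = \frac{1}{M}$, we have $\ti M \dv^j = M \cdot \frac{1}{M} = 1 \in \N$ for each $j$. Thus all hypotheses of the previous proposition are satisfied.

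Applying that proposition directly, we conclude
\begin{align*}
d_{\wv} \geq \frac{1}{\ti M \ti N} = \frac{1}{MN},
\end{align*}
which is exactly the claim. There is no genuine obstacle here since the result is purely a bookkeeping specialization; the real content was already established in the preceding proposition via the divisibility argument using coprimality of $\ti M$ and $\ti N$.
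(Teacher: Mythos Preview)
Your proposal is correct and matches the paper's approach exactly: the paper states this as an immediate corollary of the preceding proposition with no additional proof, and your verification that $\ti N = N$, $\ti M = M$ satisfy the hypotheses is precisely the intended specialization.
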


In general we can see that computation of $d_{\wv}$ is equivalent to a variant of the subset sum problem. 

\begin{prop}\label{prop: subset sum equiv}

Let $S = \{ \wv^2, \dots, \wv^N, -\dv^1, \dots, -\dv^M \}$ (note that $S$ doesn't include $\wv^1$). Then
\begin{align*}
d_{\wv} = \min_{A\subset S} \abs{ \sum_{a\in A} a }
\end{align*}
where the minimum is taken over subsets $A$ of $S$ that contain some $\wv^i$. 

\end{prop}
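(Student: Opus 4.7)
The plan is to exhibit a correspondence between pairs $(I,J)$ that realize the distance $d_\wv$ and the subsets $A\subset S$ described in the statement, and to show the two minima agree by exploiting the normalization $\sum_{i=1}^N \wv^i = \sum_{j=1}^M \dv^j = 1$.

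First I would unpack the definition: by construction,
\[
d_\wv \;=\; \min_{\substack{I \subsetneq \{1,\dots,N\} \\ I \neq \emptyset}} \;\min_{J\subset \{1,\dots,M\}} \left| \sum_{i\in I}\wv^i - \sum_{j\in J}\dv^j \right|.
\]
The key symmetry is that if we replace $(I,J)$ by $(I^c,J^c)$, then $\sum_{i\in I}\wv^i - \sum_{j\in J}\dv^j$ flips sign (since both full sums equal $1$), so the absolute value is unchanged. Moreover, $I$ being a proper nonempty subset of $\{1,\dots,N\}$ is equivalent to $I^c$ being so. This observation lets me restrict the outer minimum to $I \subsetneq \{1,\dots,N\}$ with $1\notin I$ (if $1\in I$ in a minimizer, replace with complements); equivalently $I$ is a nonempty subset of $\{2,\dots,N\}$, with $J$ still ranging over all subsets of $\{1,\dots,M\}$.

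Next I would set up the explicit bijection. Given $I\subset\{2,\dots,N\}$ nonempty and $J\subset\{1,\dots,M\}$ arbitrary, define
\[
A \;:=\; \{\wv^i : i\in I\} \,\cup\, \{-\dv^j : j\in J\} \;\subset\; S,
\]
which contains at least one $\wv^i$ (namely any with $i\in I$), and satisfies $\sum_{a\in A} a = \sum_{i\in I}\wv^i - \sum_{j\in J}\dv^j$. Conversely, any $A\subset S$ containing some $\wv^i$ arises this way: set $I = \{i\geq 2 : \wv^i \in A\}$ (nonempty by hypothesis) and $J = \{j : -\dv^j \in A\}$, which gives a valid pair on the restricted-minimum side. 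Since this correspondence preserves the quantity inside $|\cdot|$, the restricted minimum over $(I,J)$ equals $\min_A |\sum_{a\in A} a|$, completing the proof.

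The only subtle point is justifying the WLOG reduction to $1\notin I$, so I would state and use the complementation identity carefully; everything else is bookkeeping. No estimates are required, and I do not foresee any real obstacle beyond making sure that the cases $I=\emptyset$ and $I=\{1,\dots,N\}$ are properly excluded on both sides of the identity.
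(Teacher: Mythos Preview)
Your proposal is correct and follows essentially the same approach as the paper: both arguments hinge on the complementation identity $(I,J)\mapsto(I^c,J^c)$ (using $\sum_i\wv^i=\sum_j\dv^j=1$) to reduce to the case $1\notin I$, after which the correspondence between pairs $(I,J)$ and admissible subsets $A\subset S$ is immediate. The paper phrases this as two separate inequalities rather than a single bijection, but the content is the same.
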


\begin{proof}

Let $\mbf{\Lambda} = \{\wv^1, \dots, \wv^N \}$ and $\mbf{X} = \{-\dv^1, \dots, -\dv^M \}$.

Note that if $A \subset S$ and $A \cap \mbf{\Lambda} \neq \emptyset$ then
 \begin{align*}
\abs{\sum_{a\in A} a} = \abs{\sum_{p \in A \cap \mbf{\Lambda}} p + \sum_{q \in A \cap \mbf{X}} q} \geq d_{\wv}
\end{align*}
as $\sum_{q \in A \cap \mbf{\Lambda}} q + \sum_{p \in A \cap \mbf{X}} p$ is the difference between a subset sum of $\wv$ and $\dv$. 

Conversely suppose that we are given $P \subset \mbf{\Lambda}$ and $Q \subset \{\dv^1, \dots, \dv^M \}$ where $P \neq \emptyset, \mbf{\Lambda}$ such that $d_{\wv} = \abs{\sum_{p \in P} p - \sum_{q \in Q} q}$. If $\wv^1 \not\in P$ set $A = P \cup (-Q)$, where $-Q = \{-q: q \in Q \}$. We see that
 \begin{align*}
\abs{\sum_{a\in A} a} = \abs{\sum_{p \in P} p - \sum_{q \in Q} q} = d_{\wv}.
\end{align*}
If $\wv^1 \in P$ then set $A = (\mbf{\Lambda} \setminus P) \cup  (\mbf{X} \setminus -Q)$.

We see that
 \begin{align*}
\abs{\sum_{a\in A} a} 
= \abs{\sum_{p \not\in P} p - \sum_{q \not\in Q} q} 
= \abs{(1 - \sum_{p \in P} p) - (1 - \sum_{q \in Q} q)}
=\abs{\sum_{p \in P} p - \sum_{q \in Q} q}
= d_{\wv}.
\end{align*}
In either case we will have 
$
\min_{A\subset S} \abs{ \sum_{a\in A} a } \leq d_{\wv}
$
where again the minimum is taken over subsets $A$ of $S$ that contain some $\wv^i$.

\end{proof}

\begin{rmk}

Unfortunately the subset sum problem is NP complete and so in general it may not be feasible to compute the exact value of $d_{\wv}$. However in order to use the results of this paper all one needs is an approximation to $d_{\wv}$. Although there are fast approximate algorithms for the subset sum problem and an extensive literature, it seems upon inspection that most papers require the elements of the set to be positive integers or put other assumptions that are unreasonable for our situation. However, a slight variation to the classical approximate subset sum algorithm works for our case. We present it for the convenience of the reader.

\end{rmk}

First we describe a ``Trim'' function. This function takes an ordered list of real numbers and throws out almost duplicates. Our Trim function differs from the classical Trim function only in that it measures errors in absolute scale rather than log scale. 

\begin{algorithm}[H]	
		\SetKwInput{KwParameters}{Parameters}
		
		\DontPrintSemicolon	
	
		\KwParameters{ A tolerance $\eps > 0$.}
		
		\KwIn{A sorted list of real numbers, $\ti A = ( r_1, \dots, r_m )$, where $r_1 \geq r_2 \geq \dots \geq r_m$.}
		
		\Begin
		{
		$ A = (r_1) $ \;
		
		$last = r_1$ \;
		
		\For {$i \in \{2, \dots, m\}$} 
		{		
			\If {$r_i \geq last + {\eps}$} 
{			
			$last = r_i$ \;
			
			Append $r_i$ to $ A$ \;}
		}
		
		\Return {$A$}
}
	\label{alg: trim}
	\caption{Trim Routine}
\end{algorithm}

It is clear that Algorithm $\ref{alg: trim}$ runs in $O(m)$ time where $m = \abs{A}$, the size of the input list. With the modified Trim function our approximate subset sum algorithm is virtually identical to the classical one. We also recall that given two sorted lists $L_1, L_2$, we can compute the sorted list given by their union in $O(\abs{L_1} + \abs{L_2})$ time (see, for example, \cite[Section 2.3.1]{Cormen09}). Furthermore for a list $L$ and $r \in \R$ we use the notation $L+r$ to denote the list obtained from adding $r$ to each element of $L$. 

\begin{algorithm}[H]
	
	\SetKwInput{KwParameters}{Parameters}
	
	\DontPrintSemicolon	
	
		\KwParameters {A tolerance $\eps > 0$.}
		
		\KwIn {An ordered list $S = (\wv^2, \dots, \wv^N, -\dv^1, \dots, -\dv^M)$, with $\wv^i, \dv^j > 0$. }
		
		\Begin
	{	
		$L_1 = ( 0 )$ \;
		
		\For {$i \in \{2, \dots, N\}$}
{		
			$\ti L_i = L_{i-1} \cup (L_{i - 1} + \wv^i)$ \;
			 
			$L_i$ =  Trim($\ti L_i$, $\frac{\eps}{N+M}$) \;	
	
}
		
		$L_N = L_N \setminus \{0 \}$. \;
	
		\For {$i \in \{N+1, \dots, N + M\}$}
{
			$\ti L_i = L_{i-1} \cup (L_{i - 1} - \dv^{i-N})$ \;
			
			$L_i$ = Trim($\ti L_i$, $\frac{\eps}{N+M}$)	\;

}
	\Return $\min_{l \in L_{N+M}} \abs{l}$
}
	\label{alg: subset sum}
	\caption{Approximate Subset Sum}
\end{algorithm}

An analysis similar to that of the classical approximate subset sum algorithm (see for example, \cite[Theorem 35.8]{Cormen09}) shows that Algorithm \ref{alg: subset sum} runs in polynomial time and indeed approximates $d_{\wv}$. For the convince of the reader we sketch the proof.

\begin{prop}

Algorithm \ref{alg: subset sum} terminates in at most $O(\frac{(N+M)^2}{\eps})$ steps. Furthermore if $x$ is the value returned by the algorithm then $x - \eps \leq d_{\wv} \leq x$.

\end{prop}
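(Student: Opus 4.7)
My plan is to follow the classical analysis of the approximate subset sum algorithm (as in \cite[Theorem 35.8]{Cormen09}), with adaptations for the absolute-error Trim and for the requirement, inherited from Proposition \ref{prop: subset sum equiv}, that the witnessing subset must contain at least one $\wv^i$.

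\textbf{Runtime.} First I would bound the sizes of the intermediate lists. Since every $\wv^i, \dv^j \in [0,1]$ and $\sum_i \wv^i = \sum_j \dv^j = 1$, each element of $L_i$ (for $i \leq N$) lies in $[0,1]$ and each element for $N < i \leq N+M$ lies in $[-1,1]$, so every list is contained in an interval of length $2$. After applying Trim with tolerance $\eps/(N+M)$, consecutive elements of $L_i$ are separated by at least $\eps/(N+M)$, so $|L_i| \leq 2(N+M)/\eps + 1 = O((N+M)/\eps)$. Each iteration merges two sorted lists of this size and trims in linear time, so a single iteration costs $O((N+M)/\eps)$, and the total cost over $N+M$ iterations is $O((N+M)^2/\eps)$.

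\textbf{Lower bound $x \geq d_{\wv}$.} The Trim routine keeps only elements that already appeared in its input list, and both the union and shift operations produce elements of the form $\sum_{a \in A} a$ for some $A \subset S$ corresponding to the inputs processed so far. Thus, by a trivial induction, every $l \in L_{N+M}$ equals $\sum_{a \in A} a$ for some $A \subset S$. The key point is that the step $L_N := L_N \setminus \{0\}$ removes the unique element corresponding to the empty subset of $\{\wv^2, \ldots, \wv^N\}$, and no subsequent operation can reintroduce it because the later operations only combine elements of $L_N$ with sums of $-\dv^j$'s. Hence every $l \in L_{N+M}$ corresponds to a subset $A$ whose intersection with $\{\wv^2,\ldots,\wv^N\}$ is nonempty, and Proposition \ref{prop: subset sum equiv} gives $|l| \geq d_{\wv}$. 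Minimizing yields $x \geq d_{\wv}$.

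\textbf{Upper bound $x \leq d_{\wv} + \eps$.} I would prove by induction on $k$ the following statement: for every true subset sum $y$ of the first $k$ elements of $S$ that, if $k \geq N$, uses at least one $\wv^i$, there exists $z \in L_k$ with $|y-z| \leq k \eps/(N+M)$. The base case $k=1$ is immediate. For the inductive step at $k \leq N$, if $y$ uses $\wv^{k}$ write $y = y' + \wv^{k}$ with $y'$ a subset sum of the first $k-1$ elements; by induction there is $z' \in L_{k-1}$ with $|y' - z'| \leq (k-1)\eps/(N+M)$, so $z' + \wv^{k} \in \ti L_k$ and after Trim there is $z \in L_k$ within $\eps/(N+M)$ of $z' + \wv^{k}$, giving $|y - z| \leq k\eps/(N+M)$. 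The case where $y$ does not use $\wv^{k}$ is similar. For $k > N$ the same argument works, noting that any witnessing subset remains witnessing after we add or omit a $-\dv^j$. The removal of $0$ at step $N$ does not hurt because the inductive invariant from that point on only concerns subsets using at least one $\wv^i$. Applying the invariant at $k = N+M$ to the minimizer $y^* = \sum_{a \in A^*} a$ of Proposition \ref{prop: subset sum equiv} produces $z^* \in L_{N+M}$ with $|z^*| \leq |y^*| + \eps = d_{\wv} + \eps$, and hence $x \leq |z^*| \leq d_{\wv} + \eps$.

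The only subtle step is handling the constraint ``$A$ contains some $\wv^i$'' cleanly, which I expect to be the main bookkeeping obstacle; the rest is a direct transcription of the classical fully-polynomial approximation scheme with absolute rather than relative tolerance.
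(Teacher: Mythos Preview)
Your proposal is correct and follows essentially the same approach as the paper's proof. The paper phrases the error-accumulation step using Hausdorff distance, defining $\ha L_i$ as the set of ``true'' subset sums (with the constraint of containing some $\wv^j$ imposed once $i>N-1$) and asserting $d_\H(\ha L_i,L_i)\le i\eps/(N+M)$; your explicit induction on $k$ is exactly the unpacked version of that statement, and your treatment of the lower bound (that $L_{N+M}$ consists only of admissible subset sums) matches the paper's inclusion $L_i\subset \ha L_i$.
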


\begin{proof}
	
Note that if $y \in \ti L_i$ for any $i$ then $y \in [-1,1]$. Hence we see by construction of our Trim Routine (Algorithm \ref{alg: trim}), that $\abs{L_i} \leq \frac{2(N+M)}{\eps}$, where $\abs{L_i}$ denotes the number of elements in $L_i$. This proves that Algorithm \ref{alg: subset sum} terminates in at most $O(\frac{(N+M)^2}{\eps})$ steps.

Next define $\ha L_i$ to be the collection of all subset sums of the first $i-1$ elements of $S$ when $i \leq N-1$ and define $\ha L_i$ to be the collection of all subset sums of the first $i-1$ elements of $S$ that contain some $\wv^j$ when $i > N-1$. It is clear that $L_i \subset \ha L_i$. In particular $L_{N+M} \subset \ha L_{N+M}$. Since by Proposition \ref{prop: subset sum equiv}, $d_\wv = \min_{y \in \ha L_{N+M}} \abs{y}$, this gives us that $d_\wv \leq x$. 

Next note that by construction of our Trim Routine (Algorithm \ref{alg: trim}), for every $y \in \ti L_i$ there is a $z \in L_i$ so that $\abs{y-z} \leq \frac{\eps}{M+N}$, i.e. $d_\H(\ti L_i, L_i) \leq \frac{\eps}{M+N}$, where we use $d_\H$ for Hausdorff distance. Since $L_i$ arises from $i$ applications of the Trim routine we see that $d_\H(\ha L_i, L_i) \leq i\frac{\eps}{M+N}$. In particular $d_\H(\ha L_{M+N}, L_{M+N}) \leq ({M+N})\frac{\eps}{M+N} = \eps$ and so by Proposition \ref{prop: subset sum equiv}, we get $d_\wv \geq x - \eps$.
	
\end{proof}

\section{Acknowledgments}

I would like to thank Prof. Kitagawa for many helpful comments and suggestions on a previous version of this manuscript. 

\bibliographystyle{alpha}
\bibliography{snowshovelingalg}

\end{document}